\theoremstyle{definition}
\newtheorem{theorem}{Theorem}
\newtheorem*{theorema}{Theorem}
\newtheorem{corollary}[theorem]{Corollary}
\newtheorem{lemma}[theorem]{Lemma}
\newtheorem{proposition}[theorem]{Proposition}
\newtheorem{definition}[theorem]{Definition}
\newtheorem{example}[theorem]{Example}
\newtheorem{remark}[theorem]{Remark}
\numberwithin{equation}{section}
\newcommand{\C} {\mathbb{C}}
\newcommand{\CS} {\mathbb{C}^{*}}
\newcommand{\PP} {\mathbb{P}}
\newcommand{\PO} {\mathbb{P}^{1}}
\newcommand{\AO} {\mathbb{A}^{1}}
\begin{document}

\title{Relative Hilbert scheme of points}

\author{Iman \textsc{Setayesh}}
\address{Department of Mathematics, Faculty of Mathematical Sciences, Tarbiat Modares University, P.O. Box 14115-137, Tehran, Iran.}
\email{setayesh@modares.ac.ir}

\subjclass[2010]{Primary 14C05; Secondary 14J10}

\keywords{Hilbert scheme of points, Relative Hilbert scheme}

\begin{abstract}
 Let $D$ be a smooth divisor on a non singular surface $S$. We compute the Betti numbers of the Hilbert scheme of points of $S$ relative to $D$. In the case of $\PP^2$ and a line in it, we give an explicit set of generators and relations for the corresponding cohomology groups. 
\end{abstract}

\maketitle


\section{Introduction}

\subsection{Hilbert Scheme}

Let $X$ be a projective scheme over complex numbers, $\mathcal{L}$ be an ample line bundle on it and $P$ be a polynomial. Consider the contravariant functor

\begin{center}
$\mathcal{H}ilb^{P}_{X}$ $:$ $Sch \longrightarrow Sets$
\end{center}
from the category of schemes to sets, which is given by:
\begin{displaymath}
\mathcal{H}ilb^{P}_{X}(T)=\left\{
 \begin{array}{c|c}
    \xymatrix{Z \ar@{^{(}->}[rr]^i \ar[dr]_{\pi} & & X \times T \ar[dl]^{p_2} \\
                                       & T &  }  & \begin{array}{l}
                                       Z \text{ is a closed subscheme of } X \times T \\
                                       \pi\text{ is flat} \\
                                       Z_t \text{ has Hilbert polynomial}\\
                                       \text{       equal to $P$, for all } t \in T
                                       \end{array}
\end{array}
\right\}
\end{displaymath}

Since $\pi$ is flat all $Z_t$'s have the same Hilbert polynomial and the definition makes sense. 

\begin{theorema} \cite{Grothendeick} 
The functor $\mathcal{H}ilb^{P}_{X}$ is representable by a scheme. 
\end{theorema}

The proof is due to Grothendieck \cite{Grothendeick} with simplifications by Mumford \cite{Mumford}. The idea is that a subscheme $Z\subseteq \PP^{n}$ is given by its equations, which gives an injection of sets:

\begin{center}
$\lbrace$ subschemes of $\PP^{n}\rbrace\hookrightarrow \lbrace$ linear subspaces of $\mathbb{C}[x_0,...,x_{n}]\rbrace$.
\end{center}

The main technical point is to show that the infinite dimensional Grassmannian can be replaced by a finite dimensional one, and the image of the left hand side will be an algebraic subvariety of it. More details on the construction of Hilbert schemes and their infinitesimal properties can be found in \cite{Kollar}.

For the constant polynomial $n$ the associated Hilbert scheme parametrizes the set of all subschemes of $X$ with zero dimensional support and length $n$, which is called the Hilbert scheme of $n$ points on $X$ and is denoted by $X^{[n]}$.

Let $S$ be a quasi-projective non-singular surface, and $D$ be a smooth Cartier divisor. The Hilbert scheme of points of $S\setminus D$, which is not proper, may be compactified relative to $D$. This space has been constructed in \cite{LiWu}. We have an informal description of the relative Hilbert scheme of points in subsection \ref{informal} followed by a more precise definition in section \ref{background}.

The relative Hilbert scheme and more generally the moduli spaces of stable ideal sheaves play an important role in the study of degenerations of the moduli space of ideal sheaves in Donaldson-Thomas theory. The moduli stack of stable ideal sheaves shows up naturally in the study of degenerations of moduli spaces.

Consider a degeneration of a smooth variety $X^{sm}$ into a union of two smooth irreducible varieties $X_0=X_1\cup X_2$ intersecting transversally along a smooth divisor. One can consider the moduli space of stable sheaves on $X_0$. Unfortunately the standard tangent-obstruction theory of this problem is not perfect in general, and the existence of virtual cycle is not known. To overcome this problem we replace $X_0$ by the stack of expanded degenerations $\mathfrak{X}$ introduced by Jun Li \cite{Li}, and construct the moduli stack of stable ideal sheaves on $\mathfrak{X}$. It is described in more details in section \ref{background}

\subsection{G\"{o}ttsche's Formula}

For a smooth scheme $Y$ with $dim_{\C}(Y)=n$ the Poincar\'{e} polynomial $\displaystyle P_Y(t)$ and the normalized Poincar\'{e} polynomial $\hat{P}_Y(t)$ are defined by 

\begin{center}
$\displaystyle P_Y(t)=\sum_{i=0}^{2n}b_i(Y)t^i$ \hspace{5 mm} and \hspace{5 mm} $\hat{P}_Y(t)=\sum_{i=0}^{2n}b_i(Y)t^{i-n}$
\end{center}

\noindent respectively, where $b_i(Y)$ is the $i^{th}$ Betti number of $Y$. For singular varieties we use the virtual Poincar\'{e} polynomial given by the Hodge filtration as follows (for more details see section \ref{vir-betti}). $$P_{X}=\displaystyle \sum_{i,j} (-1)^{i+j} dim ( gr_{W}^{j}H^i_{c}(X))t^{j} .$$ 

The Poincar\'{e} polynomial of the Hilbert scheme of points was first studied by Ellingsrud and Str{\o}mme in \cite{EllingsrudStromme}, where they calculated the Poincar\'{e} polynomial of $(\mathbb{C}^{2})^{[n]}$. They used the Bialynicki-Birula decomposition associated with the natural torus action on $(\mathbb{C}^{2})^{[n]}$ to obtain an algebraic cell decomposition. Extending the result of G\"{o}ttsche \cite{Gottsche1} for the projective surfaces, G\"{o}ttsche and Soregel \cite{GS} proved the following theorem for quasi-projective non-singular surfaces. Their method uses Borho-MacPherson's formula for the direct image of the intersection cohomology. 

\begin{theorema}\cite{GS} 

Let $X$ be a quasi-projective nonsingular surface. Then the generating function for the Poincar\'{e} polynomial of the Hilbert scheme of $n$ points on $X$ is given by:

\begin{center}

$\displaystyle\sum_{n=0}^{\infty}
q^{n}P_{X^{[n]}}(t)=\prod_{m=1}^{\infty}\frac{(1+t^{2m-1}q^{m})^{b_{1}(X)}(1+t^{2m+1}q^{m})^{b_{3}(X)}}{(1-t^{2m-2}q^{m})^{b_{0}(X)} (1-t^{2m}q^{m})^{b_{2}(X)}(1-t^{2m+2}q^{m})^{b_{4}(X)}}$.

\end{center}

\end{theorema}

 In this paper we prove:

\begin{theorem}
The generating function for the normalized Poincar\'{e} polynomial of the relative Hilbert scheme of points is given by:

$$
\sum q^{n}\hat{P}_{S^{[n]}_{D}}(t) = \frac{(t^{2}-1)\hat{H}_{S}(q,t)}{t^{2}C_{D}(q,t)-C_{D}(q,t^{-1})},
$$

\noindent where $\hat{H}_{S}(q,t):=\sum q^{n}\hat{P}_{S^{[n]}}(t)$ is the normalized Poincar\'{e} polynomial of the Hilbert scheme of points on $S$ and 

$$C_{D}(q,t)=\displaystyle\prod_{m=1}^{\infty}\frac{(1+t^{-1}q^{m})^{b_{1}(D)}}{(1-t^{-2} q^{m})^{b_{0}(D)}(1-q^{m})^{b_{2}(D)}}.$$

\end{theorem}

\subsection{Nakajima's Basis}

Let $X$ be a quasi-projective non-singular surface. There is a natural map from the Hilbert scheme of $n$ points in $X$ to the $n^{th}$-symmetric product $X^{(n)}$ of $X$ given by:

\begin{center}
$\rho : X^{[n]} \to X^{(n)}, Z \mapsto \displaystyle\sum \ell(\mathcal{O}_{Z,p})p$.
\end{center}

For $i>0$ we define the cycles $P[i]\subset \displaystyle\coprod_{n} X^{[n-i]} \times X^{[n]} \times X$ to be:

\begin{center}
$P[i]:=\displaystyle \coprod_{n} \{ (\mathcal{I}_1,\mathcal{I}_2,p)\in X^{[n-i]} \times X^{[n]} \times X \hspace{2mm}|\hspace{2mm}  \mathcal{I}_1\supset\mathcal{I}_2  , \hspace{2mm} \rho(\mathcal{I}_2)-\rho(\mathcal{I}_1)=n[p]\} .$
\end{center}

For $i<0$ we define $P[i]$ by interchanging the role of $\mathcal{I}_1$ and $\mathcal{I}_2$ in the above two conditions. 

Let $H_{*}^{lf}(X)$ be the Borel-Moore Homology of X. For $\alpha \in H_{*}^{lf}(X)$ and $\beta \in H_{*}(X)$ and $i>0$ we define the operators  $P_{\alpha}[i]$ and $P_{\beta}[-i]$ by\vspace{3 mm}

 \hspace{15mm} $P_{\alpha}[i]: H_{*}(X^{[n]}) \to H_{*}(X^{[n-i]})$

 \hspace{42mm}$\gamma \mapsto p_{2*}((p_1^*\gamma)\cap( \pi_{1,2*}(\pi_3^*\alpha \cap P[i])))$

 \hspace{15mm} $P_{\beta}[-i]: H_{*}(X^{[n-i]}) \to H_{*}(X^{[n]})$

 \hspace{42mm}$\gamma \mapsto p_{1*}((p_2^*\gamma)\cap( \pi_{1,2*}(\pi_3^*\beta \cap P[-i])))$

\noindent respectively, where $\pi_j$ and $p_j$ are projections of $X^{[n-i]} \times X^{[n]} \times X$ and $X^{[n-i]} \times X^{[n]}$ (respectively) to their $j^{th}$ product factor. In other words, these operators are given by the correspondences defined by $\pi_{1,2*}(\pi_3^*\alpha \cap P[i])$ and $\pi_{1,2*}(\pi_3^*\beta \cap P[-i])$.

\begin{theorema} \cite{Nak},\cite{Gro}

(i) We have the following relations:\vspace{3mm}

\hspace{7mm}$[P_{\alpha}[i],P_{\beta}[j]]= (-1)^{i-1} i \delta_{i+j}<\alpha,\beta> Id$ \hspace{5mm} if $(-1)^{deg(\alpha) deg(\beta)}=1$

\hspace{5mm}$\{P_{\alpha}[i],P_{\beta}[j]\}=(-1)^{i-1} i \delta_{i+j}<\alpha,\beta> Id$ \hspace{5mm} otherwise

 (ii) $\displaystyle\bigoplus_{i=0}^{\infty}H_*(X^{[n]})$ is an irreducible representation of the Heisenberg superalgebra associated to $X$, with the highest weight vector being the generator of $H_0(X^{[0]})=\mathbb{Q}$
\end{theorema}

We give an explicit set of generators for the Hilbert scheme of points on the projective plane relative to a line, which will be the analogue of Nakajima's basis for the relative case. In the relative setting we will show that there are some new relations among these generators. 
\begin{theorem}\label{relationtheorem}
The cohomology groups of ${\PP^2}_{\PO}^{[n]}$ are generated by the product cycles. The following four types of relations (which are discussed in section 6) give a complete set of relations:
\begin{itemize}
\item Point-Bubble relations
\item Point-Point relations
\item Point-Line relations
\item Line-Line relations
\end{itemize}

\end{theorem}

We use the following theorem which connects the Chow groups and  the cohomology groups of the relative Hilbert scheme of points.
 
\begin{theorem}\label{chow-coh-sur}
The natural map from the Chow group to the Borel-Moore homology of ${\PP^2}_{\PO}^{[n]}$ is an isomorphism.
\end{theorem}

\subsection{Outline}
 
 In section 2 we recall the definition of moduli stack of stable ideal sheaves. In the case of ideals with zero dimensional support this moduli stack is the relative Hilbert scheme of points. 
 
 In section 3 we compute the Betti numbers of the Hilbert scheme of points for a surface relative to a smooth divisor on it. 
 
We then consider the natural torus action on the projective plane. In section 4 we give a combinatorial description of the fixed point loci. Using this description we prove that the natural map from the Chow groups to the cohomology groups of these spaces are surjective (Theorem 3). The main tool is the machinery of higher Chow groups, which we  will discuss in section 5. In section 6 we will give a complete geometric description of the relations for the Hilbert scheme of points on the projective plane relative to a line. In section 7 we show that all the relations arise in this way (Theorem \ref{relationtheorem}).

\section{Background Materials}\label{background}

\subsection{Informal description}\label{informal}

Let $S$ be a quasi-projective non-singular surface, and $D$ be a smooth Cartier divisor. We consider the Hilbert scheme of points in $S$ relative to $D$. This space is constructed in \cite{LiWu}. Since some of the ideas will be used later in our paper we will briefly describe the construction of this space. A more technical overview of the subject is given in the next subsection.

Consider the Hilbert scheme of points of $S \setminus D$, which is not a proper scheme. We construct a compactification relative to $D$. Consider the expanded degenerations of $S$ relative to $D$. More precisely take $B$ to be the $\PO$-bundle over $D$ corresponding to $\mathcal{N}_{D/S} \bigoplus \mathcal{O_{D}}$, i.e. $B=\mathbb{P}_{D}(\mathcal{N}_{D/S} \bigoplus \mathcal{O_{D}})$. Since $B$ is the projecivization of a direct sum it comes equipped with two natural sections which we call the zero section and the infinity section. Take $N$ copies of $B$ and glue the zero section of the $(i+1)^{th}$ copy to the infinity section of the $i^{th}$ copy, for $1\leq i\leq N-1$. Also glue the zero section of the first bubble to the divisor $D$ in $S$. We denote the resulting scheme by $S/D[N]$ and call it the length $N$ expanded degeneration of $S$ relative to $D$. Note that the normal bundle to the zero section in each copy of $B$ is $\mathcal{N}^{\vee}_{D/S}$, and the normal bundle to the infinity section is $\mathcal{N}_{D/S}$.

From this picture it is not hard to see that for any $N$ there is an action of $(\CS)^{N}$ on the space $S/D[N]$. The $i^{th}$ copy of $\CS$ acts on the $i^{th}$ copy of $B$ by fiber-wise multiplication. This action can be lifted to an action on the Hilbert scheme of points of $S/D[N]$ (with finite stabilizers). 

We give a moduli description for points that we add in order to compactify $(S \setminus D)^{[n]}$. As a point in $S \setminus D$ moves toward $D$, we obtain a family of subschemes of $S \setminus D$ over $\AO \setminus 0$. We call the total space of this family $F$. The support of a given point on the fiber above $t$ is moving toward $D$ as $t$ goes to zero. 

Take $B$ to be the blow up of $S\times \AO$ along $D\times 0$. Note that the fiber of $B$ over zero is the length $1$ expanded degeneration of $S$ relative to $D$. If we take the closure of $F$ in $B$ then we arrive at a flat (over $\mathbb{A}^1$) subscheme of $B$ and the fiber above zero is the candidate for the limit of the family $F$. 

There is a point that one should consider. The embedding of $F$ in $B$ is not canonical. Take any automorphism of $\AO \setminus 0$ i.e. multiplication by an element of $\CS$. This automorphism  induces an automorphism of $F$. We fix an embedding $\AO \setminus 0 \subset \AO$ (for example the canonical embedding). Hence any automorphism gives a possibly different subscheme of $B$. Note that all these families over $\AO \setminus 0$ are isomorphic, so their limit as a point in the moduli space should be the same. The limiting points of different embeddings of $F$ in $B$ differ by the natural action of $\CS$ on the bubble. Hence, in order to get a well-defined limit we have to identify the points in the orbit of the $\CS$ action. 

Therefor we should take the quotient of the Hilbert scheme of points in the bubble by the $\CS$ action and glue it to the Hilbert scheme of points on $S\setminus D$. If a point in the bubble moves toward the infinity section we may repeat the above procedure and add another bubble. 

Note that if all the points in a bubble go to the next bubble then that bubble would be empty. In this case the stability condition (i.e. having finite order of automorphism) forces that we have to delete the empty bubble. Fix $n$ to be the length of the subschemes. Hence any point in the relative Hilbert scheme of $n$ points on $S$ relative to $D$ can have at most $n$ bubbles glued to $(S \setminus D)$.

\subsection{The Stack of Expanded Degenerations}

Let $C$ be a nonsingular affine curve with a distinguished point $0\in C$. Let $\pi : X \to C$ be a flat projective family of schemes of relative dimention $d > 0$, which is smooth away from the fiber over $0$, and $X_0$ is a union of two smooth schemes intersecting transversally along a smooth divisor $D$. 

There is a $(\CS)^n$ action on $\mathbb{A}^{n+1}$ given by:

\begin{center}
$(t_1,\cdots,t_n).(a_1,\cdots,a_{n+1})=(t_1 a_1 , t_1^{-1} a_2 t_2 , \cdots,t_{n-1} a_n t_n , t_n^{-1} a_{n+1})$.
\end{center}

\noindent where $(t_1,\cdots,t_n) \in (\CS)^n$ and $(a_1,\cdots,a_{n+1}) \in \mathbb{A}^{n+1}$. Let $p:\mathbb{A}^{n+1}\to\mathbb{A}^{1}$ be the product morphism:

\begin{center}
$p(a_1,\cdots,a_{n+1})=a_1 a_2 \cdots a_{n+1}$.
\end{center}

By replacing $C$ with an open neighborhood of $0_C \in C$ we may assume that there is an \'{e}tale morphism $C \to \mathbb{A}^1$ so that $0_C \in C$ is mapped to $0_{\mathbb{A}^1} \in \mathbb{A}^1$ and $0_C$ is the only point that lies over $0_{\mathbb{A}^1}$. We fix such a map $C \to \mathbb{A}^1$ once and for all. Let $C[n] = C \times_{\mathbb{A}^1}\mathbb{A}^{n+1}$. For the family $\pi : X \to C$, $X[n]$ is defined as a desingularization of  $X \times_{\mathbb{A}^1} \mathbb{A}^{n+1}$, and is constructed in \cite[Section 1.1]{Li}. There is an induced $(\CS)^n$ action on $X[n]$ which comes from the action of $(\CS)^n$ on $C[n]$. The fiber $X[n]_0$ of this family over $0_{C[n]}$ is a semistable model of $X_0$ with $n+2$ components.




\begin{definition}

Let $S$ be a  $C$-scheme. An \emph{effective degeneration over} $S$ is a $C$-morphism from $S$ to $C[n]$. A pair $(\mathcal{X},p)$ consisting of a family $\mathcal{X}$ of schemes over $S$, and a surjective $S$-morphism $p:\mathcal{X}\to X\times_{C} S$ is called an \emph{expanded degeneration over} $S$, if there is an open covering $\lbrace S_{\alpha}\rbrace$ of $S$, such that over each $S_{\alpha}$ the restriction of $\mathcal{X}$ is isomorphic to an effective degeneration. Let $(\mathcal{X},p)$, $(\mathcal{X}',p')$ be two expanded degenerations over $S$, $S'$ respectively. An arrow $\mathcal{X} \to \mathcal{X}'$ consists of a $C$-morphism $S \to S'$ and an $S$-isomorphism $\mathcal{X} \to \mathcal{X}' \times_{S'} S$  compatible with their projections to $ X \times_{C} S$. Let $\mathfrak{X}$ be the category whose objects are expanded degenerations $(\mathcal{X},p)$ and its morphisms are such arrows.
\end{definition}

There is a functor $\mathcal{F} : \mathfrak{X} \to Sch/C$ that sends an expanded degeneration $(\mathcal{X},p)$ to the base scheme $S$ of the family $\mathfrak{X}$. Then $\mathfrak{X}$ together with the functor $\mathcal{F}$ is a groupoid over $C$.

\begin{proposition}\cite[Prop. 1.10]{Li} The groupoid $\mathfrak{X}$ is a stack over $C$.
\end{proposition}

\subsection{Admissible Ideal Sheaves}

\begin{definition}

Let $W$ be a scheme and $D \subset W$ a divisor. An ideal sheaf $\mathcal{I} \subset \mathcal{O}_W$ is called \emph{normal} to $D$ if the canonical homomorphism  $\mathcal{I}\otimes\mathcal{O}_D\to \mathcal{O}_D$ is injective.
\end{definition}
%
%

Normality is a local property. We say that $\mathcal{I}$ is normal to $D$ at a closed point $p\in D$ if the canonical homomorphism of stalks  $\mathcal{I}_p\otimes\mathcal{O}_{D,p}\to \mathcal{O}_{D,p}$ is injective. Then $\mathcal{I}$ is normal to $D$ if and only if it is normal at every closed point $p \in D$.

Since completion is an exact functor on the category of finitely generated modules over a Noetherian ring, we have:

\begin{lemma}\label{completion}

Let $\mathcal{I}$ be an ideal sheaf on $Y$. $\mathcal{I}$ is normal at $p\in D$ if and only if the canonical homomorphism $\hat{\mathcal{I}}_p\otimes\hat{\mathcal{O}}_{D,p}\to \hat{\mathcal{O}}_{D,p}$ is injective.
\end{lemma}


\begin{corollary}\label{relative remark}
Let $Y$ be a smooth variety, $D$ a smooth divisor in $Y$ and $\mathcal{I}_Z\subset \mathcal{O}_Y$ the ideal sheaf of a closed zero dimensional subscheme. $\mathcal{I}$ is normal to $D$ if and only if $Z\cap D = \emptyset$.
\end{corollary}
\begin{proof}
Let $\mathcal{I}$ be normal to $D$ and $p\in Z\cap D$. Since $Z$ is zero dimensional, we can pick an analytic open neighborhood $U$ of $p$ such that $Z\cap U =p$. Take $f$ to be a regular function on $U$ such that $D=(f=0)$. Then $f\otimes 1$ is a nonzero element of  $\hat{\mathcal{I}}_p\otimes\hat{\mathcal{O}}_{D,p}$ that is sent to zero by $\hat{\mathcal{I}}_p\otimes\hat{\mathcal{O}}_{D,p}\to \hat{\mathcal{O}}_{D,p}$, which contradicts the result of lemma \ref{completion}. Hence $Z\cap D = \emptyset$. 

For the converse, we assume that $Z\cap D = \emptyset$. Then for any $p\in D$ there is an open analytic neighborhood that does not intersect $Z$. Hence we have  $\hat{\mathcal{I}}_p = \hat{\mathcal{O}}_p$ and the converse follows.
\end{proof}

\begin{lemma}\cite[Prop. 3.7]{LiWu}\label{normal-degeneration}
Let $W=W_1\cup W_2$ be the union of two smooth subschemes that intersect transversely along $D$. $\mathcal{I}$ is normal to $D$ if and only if for $i=1,2$, $\mathcal{I}|_{W_i}$ is normal to $D$.
\end{lemma}

Let $X[n]_0=B_0\cup B_1\cup \cdots \cup B_{n+1} $ be a semistable model of $X_0$. The singular locus of $X[n]_0$ is the disjoint union of $n+1$ copies of $D$ which we denote by $D_i$ for $i=0,\cdots,n$. 

\begin{definition}
An ideal sheaf $\mathcal{I}$ on $X[n]_0$ is \emph{admissible} if it is normal to $D_i$ for $i=0,\cdots,n$.
\end{definition}

\begin{lemma}
Let $\mathcal{I}$ be an ideal sheaf on $X[n]_0$. 
\begin{enumerate}
\item If $\mathcal{I}$ is admissible then $\mathcal{I}\otimes\mathcal{O}_{B_k}$ is an ideal sheaf on $B_k$, and it is normal to the special divisors $D_{k-1}$ and $D_k$ of $B_k$ for all $k$.
\item Conversely, for every $k$ let $\mathcal{I}_k$ be an ideal sheaf on $B_k$ which is normal to $D_{k-1}$, $D_k$ and $\mathcal{I}_k\otimes\mathcal{O}_{D_k}=\mathcal{I}_{k+1}\otimes\mathcal{O}_{D_k}$. Then there is an admissible ideal sheaf $\mathcal{I}$ on $X[n]_0$ such that $\mathcal{I}\otimes\mathcal{O}_{D_k}=\mathcal{I}_{k}$.
\end{enumerate}
\end{lemma} 

\begin{proof}
The result follows from lemma \ref{normal-degeneration} by induction on $n$.
\end{proof}

Since we study the case where the dimension of the support is zero, by corollary ~\ref{relative remark} we know that the support of these ideals have no intersection with the special divisors in the bubbles. So the data of an ideal sheaf normal to all the $D_i$'s over a semistable model $X[n]_0$ would consist of ideal sheaves on each bubble and the base, such that their supports are disjoint from the special divisors. 

\subsection{Stack of Stable Ideal Sheaves}

\begin{definition}\cite[Definition 4.2]{LiWu}
 An \emph{automorphism} of an ideal sheaf $\mathcal{I}$ over $X[n]_0$ is an isomorphism of  $X[n]_0$ that fixes $\mathcal{I}$. $\mathcal{I}$ is called \emph{stable} if it is admissible and has finitely many automorphisms.

\end{definition}

\begin{definition}\cite[Definition 3.9,4.2]{LiWu}
Let $\mathcal{X}/S$ be an expanded degeneration, and $P$ be a fixed polynomial. Let $\phi: \mathcal{I} \to \mathcal{O}_{\mathcal{X}}$ be an ideal sheaf on $\mathcal{X}$ such that $coker{\phi}$ is $S$ flat. $\mathcal{I}$ is called \emph{admissible} if $\mathcal{I}_s=\mathcal{I}|_{\mathcal{X}_s}$ is admissible for every closed point $s\in S$. It is called stable if $\mathcal{I}_s$ is stable for every closed point $s\in S$. It is called a family of stable ideal sheaves of type $P$ on $\mathcal{X}/S$ if it is stable and the  Hilbert polynomial of each fiber is $P$.

\end{definition}

We are now ready to define the stack of stable ideal sheaves of type $P$. The objects of $\mathcal{X}^{P}_{X/C}$ are the pairs  $(\mathcal{X}/S,\mathcal{I})$ of a family of stable ideal sheaves of type $P$ on $\mathcal{X}/S$. A map between two objects $(\mathcal{X}_1/S_1,\mathcal{I}_1)$ and $(\mathcal{X}_2/S_2,\mathcal{I}_2)$, is given by a map between expanded degeneration $(\mathcal{X}_1/S_1)$ and $(\mathcal{X}_2/S_2)$ such that the induced map on the ideal sheaves is an isomorphism. 

\begin{theorem}\cite[Prop. 4.14]{LiWu} $\mathcal{X}^{P}_{X/C}$ is a proper Deligne-Mumford stack of finite type over $C$.
\end{theorem}

\section{Betti Numbers of the Relative Hilbert Scheme of Points}\label{vir-betti}

In order to compute the analogue of G\"{o}ttsche's formula for the relative case we need the following theorem (see for example \cite{Durfee}):

\begin{theorem}\label{virtual-p}

To any complex algebraic variety $X$ one can assign a virtual
Poincar\'{e} polynomial $P_{X}(t)$ with the following properties:

\begin{enumerate}
\item $P_{X}(t)= \sum rank(\mathrm{H}^{i}(X)) t^{i}$ if $X$ is non-singular and projective.
\item $P_{X}(t)=P_{Y}(t)$+$P_{U}(t)$ if $Y$ is a closed algebraic subset of $X$ and $U=X\setminus Y$.
\item If $X$ is a disjoint union of a finite number of locally closed subvarieties $X_{i}$, then $P_{X}(t)=\sum
P_{X_{i}}(t)$.
\item If $X \rightarrow Y$ is a bundle with fiber $F$, which is locally trivial in the Zarisky topology, then $P_{X}(t)= P_{Y}(t) \cdot P_{F}(t)$.
\end{enumerate}

\end{theorem}

\begin{example}

For $\PO$ the virtual Poincar\'{e} polynomial and the Poincar\'{e} polynomial coincide. Hence we have $P_{\PO}(t)=t^2+1$, and $\CS$ can be
obtained by removing two points from $\PO$. Thus by the second property we have  $P_{\CS}(t)=t^2-1$.

If $D$ is a curve with Betti numbers $a_0$,$a_1$ and $a_2$, then the Betti numbers of $D\times \CS$ can be computed to give: $P_{D \times\CS}(t)=(a_2t^2+a_1t+a_0)(t^2-1)$.
\label{ex:no1}
\end{example}

\begin{remark}
The virtual Poincar\'{e} polynomial can be defined for singular varieties as well. Deligne in \cite{De} and Gillet and Soul\'{e} in \cite{GSo} show that for any complex algebraic variety one can define virtual Betti numbers that satisfy property 1-4 of theorem \ref{virtual-p}. They show that $$P_{X}=\displaystyle \sum_{i,j} (-1)^{i+j}  dim ( gr_{W}^{j}H^i_{c}(X))t^{j} $$ satisfies these properties. In \cite[Section 4]{Joyce} Joyce proved that this machinery can be extended to work in the category of Artin stacks.
\end{remark}

\begin{theorem}
Let $S$ be a smooth quasi-projective surface, and $D$ be a smooth Cartier divisor. The generating function for the normalized Poincar\'{e} polynomial of the relative Hilbert scheme of points is given by:

$$\sum q^{n}\hat{P}_{S^{[n]}_{D}}(t) = \frac{(t^{2}-1)\hat{H}_{S}(q,t)}{t^{2}C_{D}(q,t)-C_{D}(q,t^{-1})},$$

\noindent where $\hat{H}_{S}(q,t)$ is the normalized Poincar\'{e} polynomial of the Hilbert scheme of points on $S$ and $$C_{D}(q,t)=\displaystyle\prod_{m=1}^{\infty}\frac{(1+t^{-1}q^{m})^{b_{1}(D)}}{(1-t^{-2} q^{m})^{b_{0}(D)}(1-q^{m})^{b_{2}(D)}} .$$

\end{theorem}

\begin{proof}

A point $p$ of the relative Hilbert scheme corresponds to a subscheme of an expanded degeneration. Such a subscheme is the disjoint union of:
\begin{itemize}
\item Point with support in $S\setminus D$
\item Point with support in the $i^{th}$ bubble for some $1\leq i \leq n$ (which is isomorphic to the total space of the
normal bundle of $D$ in $S$ minus the zero section)
\end{itemize}

We call these the components of $p$.

The stability condition translates as follows. If the $j^{th}$ bubble of the corresponding expanded degeneration is empty (i.e. has no point supported on it), then all the bubbles with index greater than $j$ are also empty. 

Therefore for such $p$ the components of $p$ can be considered as:
\begin{itemize}
\item A point in the Hilbert scheme of points of $S\setminus D$.
\item A point in the Hilbert scheme of points of the $i^{th}$ copy of the total space of the
normal bundle of $D$ in $S$ minus the zero section (for some $1\leq i \leq n$).
\end{itemize}

The discussion in the beginning of section 2 imposes a condition. The point in the Hilbert scheme of points on the $i^{th}$ copy of the total space of the normal bundle is defined up to the $\CS$ action i.e. if two such point can be obtained from each other by acting $\CS$, then as components of $p$ they are the same. 

Hence we obtain a stratification of the relative Hilbert scheme of points. Each strata is the product of the Hilbert scheme of points on $S \setminus D$ and a number of copies of the quotient of the Hilbert scheme of points of the total space of the
normal bundle of $D$ in $S$ minus the zero section by the action of $\CS$.


  For a given surface $Y$ we use $H_{Y}(q,t)$ to denote $\sum q^{n}P_{Y^{[n]}}(t)$. The virtual Poincar\'{e} polynomial of the Hilbert scheme of the  projectivized normal bundle without zero and infinity section is $H_{\mathcal{N}_{D/S}^{o}}(q,t)$. Since we consider the non-empty bubbles we get $H_{\mathcal{N}_{D/S}^{o}}(q,t)-1$. Each bubble is obtained by taking quotient with the $\CS$. By Theorem 5.4 from \cite{GP} the virtual Poincar\'{e} polynomial of the quotient space is the quotient of $H_{\mathcal{N}_{D/S}^{o}}(q,t)-1$ by $t^2-1$ (the virtual Poincar\'{e} polynomial of $\CS$). Hence the virtual Poincar\'{e} polynomial of a given bubble is $$\displaystyle\frac{H_{\mathcal{N}_{D/S}^{o}}(q,t)-1}{t^{2}-1}.$$ We have a stratification of the relative Hilbert scheme according to the number of bubbles. The virtual Poincar\'{e} polynomial of the part with $i$ bubbles is $$H_{S\setminus D}(q,t) \displaystyle\left(\frac{H_{\mathcal{N}_{D/S}^{o}}(q,t)-1}{t^{2}-1}\right)^{i} .$$ By the above discussion the relative Hilbert scheme is stratified by such parts, therefor:\vspace{3mm}

$$\sum q^{n}P_{S^{[n]}_{D}}(t)=H_{S\setminus D}(q,t) \displaystyle\sum_{i=0}^{\infty}\left(\frac{H_{\mathcal{N}_{D/S}^{o}}(q,t)-1}{t^{2}-1}\right)^{i}.$$

Note that for the Hilbert scheme of $n$ points (the coefficient of $q^n$) there is no contribution from terms with exponent larger than $n$ in the sum, which reflects the fact that we can not have more than $n$ bubbles for a subscheme of length $n$. 

If we define $C_{D}(q,t):=\displaystyle\prod_{m=1}^{\infty}\frac{(1+t^{-1}q^{m})^{b_{1}(D)}}{(1-t^{-2} q^{m})^{b_{0}(D)}(1-q^{m})^{b_{2}(D)}}$,
then by G\"{o}ttsche's formula and example ~(\ref{ex:no1}) we find:\vspace{5mm}

$\left\{
\begin{array}{lcl}
 H_{S\setminus D}(q,t)&=&\displaystyle\prod_{m=1}^{\infty}\frac{(1+t^{2m-1}q^{m})^{b_{1}(S)-b_{1}(D)}(1+t^{2m+1}q^{m})^{b_{3}(S)}}{(1-t^{2m-2}
 q^{m})^{b_{0}(S)-b_0(D)}(1-t^{2m}q^{m})^{b_{2}(S)-b_2(D)}(1-t^{2m+2}q^{m})^{b_{4}(S)}}\\
 &&\\
&=&\displaystyle\frac{H_{S}(q,t)}{C_{D}(qt^2,t)}\\
&&\\
H_{D\times\CS}(q,t)&=&\displaystyle\prod_{m=1}^{\infty}\frac{(1+t^{2m-1}q^{m})^{-b_{1}(D)}(1+t^{2m+1}q^{m})^{b_{1}(D)}}{(1-t^{2m-2}q^{m})^{-b_{0}(D)}
(1-t^{2m}q^{m})^{b_0(D)-b_{2}(D)}(1-t^{2m+2}q^{m})^{b_{2}(D)}}\\
&&\\
&=&\displaystyle \frac{1}{C_{D}(qt^2,t)}C_{D}(qt^2,t^{-1})
\end{array}
\right.$
\vspace{5mm}

Note that the change of variable from $q$ to $qt^2$ corresponds to writing the Poincar\'{e} polynomial in the normalized form. Hence $H_{Y}(q,t)=\hat{H}_{Y}(qt^2,t)$ and we can summarize all this computation as follows:



\begin{eqnarray}
\sum q^{n}\hat{P}_{S^{[n]}_{D}}(t)&=&\hat{H}_{S\setminus D}(q,t) \left(\displaystyle\sum_{i=0}^{\infty}\left(\frac{\hat{H}_{\mathcal{N}_{D/S}^{o}}(q,t)-1}{t^{2}-1}\right)^{i}\right) \nonumber\\
&=&\frac{\hat{H}_{S}(q,t)}{C_{D}(q,t)}\left(\sum_{i=0}^{\infty}\left(\frac{\hat{H}_{D\times \CS}(q,t)-1}{t^{2}-1}\right)^{i}\right)\nonumber\\
&=&\frac{\hat{H}_{S}(q,t)}{C_{D}(q,t)}\left(\frac{1}{1-\frac{\frac{C_{D}(q,t^{-1})}{C_{D}(q,t)}-1}{t^{2}-1}}\right)\nonumber\\
&=& \frac{(t^{2}-1)\hat{H}_{S}(q,t)}{t^{2}C_{D}(q,t)-C_{D}(q,t^{-1})}. \label{eq:no1}
\end{eqnarray}

\end{proof}

\begin{example}\label{plane example}

For $S=\PP^2$ and $D$ a line by Eq.~(\ref{eq:no1}) we have:
$$
\begin{array}{lll}
\hat{H}_{{\PP^2}^{[n]}_{\PO}}(q,t)&=&\frac{\displaystyle(t^{2}-1)\prod_{m=1}^{\infty}\frac{1}
{(1-t^{-2} q^{m})(1-q^{m})(1-t^{2}q^{m})}}{\displaystyle t^{2}\prod_{m=1}^{\infty}\frac{1}
{(1-t^{-2} q^{m})(1-q^{m})}-\displaystyle\prod_{m=1}^{\infty}\frac{1}
{(1-q^{m})(1-t^{2}q^{m})}} \\
&&\\
 &=&\displaystyle\frac{t^{2}-1}{\ t^{2}\prod_{m=1}^{\infty}(1-t^{2}q^{m})-\prod_{m=1}^{\infty}(1-t^{-2}q^{m})}.
\end{array}
$$

\end{example}

\section{Torus Action on the Relative Hilbert Scheme of Points and The Fixed Point Loci}

From now on we consider the case of projective plane and a line on it. We start by taking the natural $(\CS)^3$ action on the relative Hilbert scheme of points, and give a description of the fixed point loci. If the relative divisor is given by $\{x_0=0\}$, we consider the following action on the projective plane:

 \begin{center}$(t_0,t_1,t_2).[x_0;x_1;x_2]\mapsto[t_0 x_0;t_1 x_1;t_2 x_2]$.\end{center}

 This action fixes the relative divisor, and induces an action on the normal bundle of this divisor. Hence it induces an action on each bubble, which gives us an induced action on the whole relative Hilbert scheme.

 Pick a fixed point $p\in {\PP^2}^{[n]}_{\PO}$. Then $p$ has a part supported on the projective plane, and a part that is supported on the bubbles. The part with support on the plane is supported on $[1;0;0]$ since this is the only fixed point on the plane. Since $p$ is fixed under the action, its support is also fixed. Locally this part is a subscheme of $\C^2$ that is fixed under the natural $(\CS)^2$ action, i.e. a homogenous ideal supported at the origin. If we fix $k$ to be the length of this part, any such homogenous ideal can be parameterized by a Young tableau of length $k$. 
 
 With the same argument the part supported on the bubbles can have its support (on each bubble) only on the fibers above the zero and infinity. So if we look at one of the bubbles and restrict our attention to the fiber above zero then the local picture is $\C^2$ with only one $\CS$ action on one of the coordinates, i.e. $t.(x,y)\mapsto(tx,y)$. If $I$ is a fixed ideal we can pick a set of generators which is fixed under the action (up to scalar). This means that each generator is homogenous with respect to $x$.

 If $\{x^i f_i(y)\}$ is such a set then $f_i | f_j$ for each $j<i$. This means if we fix a root of $f_0$, and denote by $a_i$ the multiplicity of this root in $f_i$, then we have $...<a_2<a_1<a_0$. To each root of $f_0$ we can associate a Young tableau with $a_0$ boxes in the first column and $a_1$ boxes in the second column and so on. For example if we consider $\mathcal{I}=\langle xy^5(y-1)^4,x^2y^2(y-1),x^3y^1\rangle$ the corresponding diagrams for the roots 0 and 1 are:
  
\begin{figure}[h]
\centerline{\includegraphics[scale=.45]{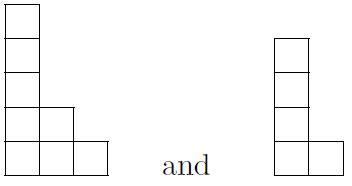}}
\end{figure}
%
%
%
%
 
 The Young tableau associated with a deformation of this ideal, where two of these roots come together will be the sum of the Young tableaux associated with these roots.
 
\begin{figure}[h]
\centerline{\includegraphics[scale=.45]{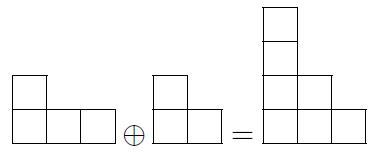}}
\end{figure}

 For a given fixed point in each bubble we get two such ideals, one above zero and one above infinity. Fix the combinatorial data of the Young tableaux of all the roots of both of these ideals. If they have $k$ and $l$ distinct roots (respectively), then the closure of the locus of such ideals in the relative Hilbert scheme is isomorphic to the quotient of the  moduli space of $k$ unordered red and $l$ unordered blue points in $\CS$ by the $\CS$ action (The $\CS$ acts by dilation). Note that these points can come together. The moduli space of $k$ unordered points in $\CS$ is $\CS \times (\C)^{k-1}$ since these points can be thought of as the roots of a monic polynomial of degree $k$ with non zero constant term. So the closure of this locus is isomorphic to $(\CS)^2 \times (\C)^{k+l-2}$ modulo $\CS$ (if $k$ and $l$ are both nonzero), and if one of them, say $l$, is zero it is isomorphic to $(\CS) \times (\C)^{k-1}$ modulo $\CS$. Using the $\CS$ action we can set the constant term of one of these polynomials equal to 1, say the one with $k$ roots, and since $\CS$ acts on the constant term by $ t . b_0 \mapsto t^k b_0$ the ambiguity is a $k^{th}$ root of unity.

 In sum, for any fixed point we can consider the combinatorial data associated to it, and this gives us a stratification of the fixed point loci into parts which are isomorphic to the product of quotients of $(\CS)^a \times (\C)^{b}$ by a finite group action. 
 
 \begin{remark}
 Note that the above description of the fixed locus shows that it is a smooth DM-stack which allows us to use the localization theorem in the next section.
 \end{remark}

\section{Chow-Cohomology Correspondence}

The main theorem of this section is the following:

\begin{theorem}\label{chow-coh-surjective}

The natural map between the Chow group and the Borel-Moore homology of ${\PP^2}_{\PO}^{[n]}$ is an isomorphism.
\end{theorem}

The strategy of the proof is to relate the cohomology (Chow) groups of the relative Hilbert scheme to the cohomology (Chow) groups of the fixed point locus. This is done by the localization formula of Atiyah and Bott. The localization formula for cohomology ring of compact spaces is proven in \cite{AB,Bo}. The case of Chow groups is done in \cite{EG}. The case of Deligne-Mumford stacks is covered in \cite{GP2}. 

Let $X$ be a space with a $G=(\CS)^n$ action. We have $H_G^{*}(pt)=Q[t_1,\cdots,t_n]$. Let $U$ be set of homogeneous elements in the ideal $<t_1,\cdots,t_n> \subset Q[t_1,\cdots,t_n]$. The localization theorem for the equivariant cohomology says that the equivariant inclusion map $i_*:H^{*}(X^{G})\otimes U^{-1} \to H_{G}^{*}(X)\otimes U^{-1}$ is an isomorphism. 
 
  We show that for the fixed point locus the natural map between Chow and cohomology is an isomorphism. This theorem with the localization theorem show that the natural map between the equivariant Chow group and the equivariant Borel-Moore homology of the relative Hilbert scheme of points of the projective plane and a line on it is an isomorphism. Hence by taking the non-equivariant elements of both sides we otain the proof of Theorem ~\ref{chow-coh-surjective}. 

So far we reduced the proof of Theorem ~\ref{chow-coh-surjective} to the proof of the following theorem. 

\begin{theorem}
The natural map between the Chow group and the Borel-Moore homology of the $\CS$ fixed point locus of ${\PP^2}_{\PO}^{[n]}$ is an isomorphism.
\end{theorem}

In order to prove this theorem we prove that this map is an isomorphism for a class of stacks containing the fixed point locus as an element. More precisely :

\begin{definition}
The class of linear stacks is the smallest class of stacks that contains quotient stacks of affine spaces of any dimension by the action of a finite group, with the property that:
\begin{itemize}
\item The complement of any linear stack embedded in the quotient stack of affine space (by a finite group) is a linear stack.
\item Any space that can be stratified as a finite disjoint union of linear stacks is a linear stack.
\end{itemize}

\end{definition}

For the definition and basic properties of the morphism between Chow group and the Borel-Moore homology for DM-stacks see \cite{Olsson}. 
 
\begin{theorem}\label{chow-coh}

For any linear stack $X$ over the complex numbers, the natural map:

\begin{center}$CH_{i}X\otimes\mathbb{Q} \to W_{-2i}H_{2i}^{BM}(X,\mathbb{Q})$\end{center}

from the Chow groups into he smallest space of Borel-Moore homology with respect to the weight filtration, is an isomorphism. 
\end{theorem}

We will follow the argument of Totaro in \cite{Totaro} in which he proved the same theorem holds for linear varieties. The argument uses the machinary of higher Chow groups. For definitions and basic properties of higher Chow groups of schemes see \cite{Bloch}. For example the $CH(X,0)$ is the Chow groups of $X$ as defined by Fulton \cite{Fulton}. In \cite{Joshua} Joshua proved that the similar machinery works for the stacks. 

\begin{definition}
A stack $X$ satisfies:

\begin{itemize}
\item the \emph{weak property} if $CH^{dimX-i}(X,0)\otimes\mathbb{Q} \to W_{-2i}H_{2i}^{BM}(X,\mathbb{Q})$ is an isomorphism.

\item the \textit{strong property} if it satisfies the weak property and also the map

$CH^{dimX-i}(X,1)\otimes\mathbb{Q} \to gr^{W}_{-2i}H_{2i}^{BM}(X,\mathbb{Q})$ is surjective.
\end{itemize}

\end{definition}

Since we only work with Chow groups with coefficients in $\mathbb{Q}$, by abuse of notation we denote the Chow groups of $X$ with coefficients in $\mathbb{Q}$ by $CH^{i}(X,j)$.

\begin{lemma}
Let $X$ be a given stack and $S$ be substack of $X$ that satisfies the weak property, and let $U=X-S$.

a) If $X$ satisfies the strong property then $U$ also satisfies the strong property.

b) If $U$ satisfies the strong property then $X$ satisfies the weak property.
\end{lemma}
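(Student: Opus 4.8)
The plan is to follow Totaro's method and reduce both parts to a diagram chase between two localization long exact sequences placed side by side via the cycle class map. Set $n=\dim X$ and, in keeping with the homological indexing of the definitions above, write $CH_i(-,j):=CH^{\dim(-)-i}(-,j)$, all Chow groups taken with $\mathbb{Q}$-coefficients. For the closed substack $S\hookrightarrow X$ with open complement $U=X-S$, Bloch's localization sequence for higher Chow groups gives an exact sequence
\begin{equation*}
CH_i(X,1)\to CH_i(U,1)\xrightarrow{\partial}CH_i(S,0)\to CH_i(X,0)\to CH_i(U,0)\to 0,
\end{equation*}
while the localization sequence for rational Borel--Moore homology is a long exact sequence of mixed Hodge structures
\begin{equation*}
\cdots\to H^{BM}_{2i+1}(X)\to H^{BM}_{2i+1}(U)\xrightarrow{\partial}H^{BM}_{2i}(S)\to H^{BM}_{2i}(X)\to H^{BM}_{2i}(U)\to H^{BM}_{2i-1}(S)\to\cdots
\end{equation*}
The cycle class maps assemble these into a commutative ladder; the key structural input, exactly as in \cite{Totaro}, is that the cycle map commutes with restriction to $U$, proper pushforward from $S$, and the two boundary operators $\partial$. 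Note that the cycle map on $CH_i(-,1)$ lands in $H^{BM}_{2i+1}$, so the second map of the strong property is $CH_i(-,1)\to gr^W_{-2i}H^{BM}_{2i+1}(-)$.

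Next I would pass to lowest weights. Every arrow in the Borel--Moore sequence is a morphism of mixed Hodge structures, and such morphisms are strict for the weight filtration, so $gr^W_{-2i}$ is exact and may be applied termwise. Since $H^{BM}_{k}(Y)$ has weights $\geq -k$, we get $gr^W_{-2i}H^{BM}_{2i}(Y)=W_{-2i}H^{BM}_{2i}(Y)$ (the bottom weight) and $gr^W_{-2i}H^{BM}_{2i-1}(Y)=0$. Hence the homology sequence collapses to the exact sequence
\begin{equation*}
gr^W_{-2i}H^{BM}_{2i+1}(X)\to gr^W_{-2i}H^{BM}_{2i+1}(U)\xrightarrow{\partial}W_{-2i}H^{BM}_{2i}(S)\to W_{-2i}H^{BM}_{2i}(X)\to W_{-2i}H^{BM}_{2i}(U)\to 0,
\end{equation*}
which lines up term by term with the higher Chow sequence. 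In the resulting ladder the vertical map over $CH_i(S,0)$ is the weak-property isomorphism for $S$, the maps over $CH_i(\cdot,0)$ are the weak-property maps, and the maps over $CH_i(\cdot,1)$ are exactly the surjections of the strong property.

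With the ladder in hand, both statements are short chases. For (a), assume $X$ is strong (so the map over $CH_i(X,0)$ is an isomorphism and the one over $CH_i(X,1)$ is onto) and recall $S$ is weak. Applying the five lemma to the tail $CH_i(S,0)\to CH_i(X,0)\to CH_i(U,0)\to 0$, using that the first two vertical maps are isomorphisms, shows $CH_i(U,0)\to W_{-2i}H^{BM}_{2i}(U)$ is an isomorphism, which is the weak property of $U$; then the epimorphism four lemma applied to $CH_i(X,1)\to CH_i(U,1)\to CH_i(S,0)\to CH_i(X,0)$ — with the outer verticals over $CH_i(X,1)$ and $CH_i(S,0)$ surjective and the one over $CH_i(X,0)$ injective — yields surjectivity of $CH_i(U,1)\to gr^W_{-2i}H^{BM}_{2i+1}(U)$, so $U$ is strong. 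For (b), assume $U$ strong and $S$ weak; the five lemma applied to $CH_i(U,1)\to CH_i(S,0)\to CH_i(X,0)\to CH_i(U,0)\to 0$ — where the vertical over $CH_i(U,1)$ is surjective and those over $CH_i(S,0)$ and $CH_i(U,0)$ are isomorphisms — forces the vertical over $CH_i(X,0)$ to be an isomorphism, i.e. $X$ is weak.

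I expect the diagram chase itself to be routine; the genuine obstacle is securing its inputs in the stacky setting rather than for varieties. Concretely, one must know that higher Chow groups of Deligne--Mumford stacks with $\mathbb{Q}$-coefficients admit Bloch's localization sequence, that the rational Borel--Moore homology of such a stack carries a mixed Hodge structure with weights bounded below exactly as for varieties, and that the cycle map is a morphism of localization sequences intertwining the two boundary operators $\partial$. Each of these reduces rationally to the coarse moduli space, so I would first record that reduction; once it is in place, the weight-strictness collapse and the four- and five-lemma arguments go through verbatim as in \cite{Totaro}.
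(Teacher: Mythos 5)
Your proposal is correct and takes essentially the same route as the paper: the identical ladder comparing Bloch's localization sequence for higher Chow groups with the weight-truncated Borel--Moore homology sequence, followed by the same four/five-lemma chases, with your added (and welcome) care in justifying the weight collapse via strictness and in flagging the stacky inputs. If anything you are slightly more precise than the paper's text: in part (b) you rightly conclude a full isomorphism over $CH^{\dim X-i}(X,0)$ (the paper's wording claims only surjectivity of the fourth column, though its chase yields injectivity too, using surjectivity of the column over $CH^{\dim U-i}(U,1)$), and you correctly take the target of the degree-one cycle map to be $gr^{W}_{-2i}H^{BM}_{2i+1}$, matching the diagram in the paper's proof rather than the $H^{BM}_{2i}$ typo in its definition of the strong property.
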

\begin{proof}

a) We have the following exact sequences:

\begin{small}
\begin{flushleft}
$\begin{array}{ccccccccccccc}
\\
CH^{dimX-i}(X,1)& \rightarrow & CH^{dimU-i}(U,1)& \rightarrow & CH^{dimS-i}(S,0)& \rightarrow & CH^{dimX-i}(X,0)& \rightarrow & CH^{dimU-i}(U,0)& \rightarrow &0\\
\downarrow &  & \downarrow &  & \downarrow & & \downarrow &  & \downarrow  &  & \downarrow\\ 
gr_{-2i}^{W}H^{BM}_{2i+1}(X,\mathbb{Q}) & \rightarrow & gr_{-2i}^{W}H^{BM}_{2i+1}(U,\mathbb{Q}) & \rightarrow & W_{-2i}H^{BM}_{2i}(S,\mathbb{Q}) & \rightarrow & W_{-2i}H^{BM}_{2i}(X,\mathbb{Q}) & \rightarrow & W_{-2i}H^{BM}_{2i}(U,\mathbb{Q})& \rightarrow &0\\
\\
\end{array}$
\end{flushleft}
\end{small}

In this diagram the first column is surjective, and the third and forth column are isomorphisms, so by diagram chasing we see that the second column is surjective and the fifth column is an isomorphism. Hence $U$ satisfies the strong property.

b) By the same argument in this case by assumption the second column is surjective, and the third and fifth are isomorphism so the fourth column is surjective. Thus $X$ satisfies the weak property.
\end{proof}

\begin{proof}[\textbf{Proof of Theorem ~\ref{chow-coh}}]

Consider the qoutient map from the affine space to the quotient stack. By the functoriality of the morphism between Chow group and the Borel-Moore homology (see \cite{Olsson}) we obtain a commutative diagram which shows that the stack quotient of the affine space satisfies the weak and strong properties. Hence by the previous lemma we see that every linear stack satisfies the weak property. Thus the natural map $CH^{dimX-i}(X,0)\otimes\mathbb{Q} \to W_{-2i}H_{2i}^{BM}(X,\mathbb{Q})$ is an isomorphism.
\end{proof}

\section{Projective Plane and One line}

\subsection{Generators and Relations}

In this section we work with $\PP^2$ and a line as the special divisor. We extend Nakajima's notation for the cohomology classes to the relative Hilbert scheme of points. The homology group of the $\PP^2$ has three generators and we denote them by
$\alpha_i$ for $i=0,1,2$, where $\alpha_i$ is the cycle with dimension $i$. We denote the classes with support in the $k^{th}$ bubble by $\beta_i^{k}$ for $i=0,1$. More precisely, consider the locus in the Hilbert scheme of points of ${N}_{D/S}^{o}$. The Nakajima cycle associated to $\beta_i$ for $i=0,1$ gives a subscheme $B_i$ of the $Hilb({N}_{D/S}^{o})$. Take $\beta_i^{k}$ to be the closure of the image of $B_i$ in the Hilbert scheme of the $k^{th}$ bubble.

We represent a cohomology class as a product of $\alpha$'s and $\beta$'s, in order to show the support of the points in that cohomology class.
If a point with support in a given cycle has multiplicity, we show that by putting that number in the bracket. It means that $$\displaystyle \prod_{i \in A, j\in B}\alpha_{a_i}[p_i]\beta_{b_j}^{n_j}[m_j]$$ represents the Chow class such that for $i \in A$ there is a point supported on a representative of $\alpha_{a_i}$ with multiplicity $p_i$. Similarly for each $j \in B$ there is a point supported on a representative of $\beta_{b_j}$ in the $n_j^{th}$-bubble, with multiplicity $m_j$. We call these classes the \emph{product classes}.

In the construction of the relative Hilbert scheme we started by gluing certain Hilbert schemes and we took the quotient by the $\CS$ actions. For those Hilbert schemes by the Nakajima theorem we have a set of generators for the cohomology and by the construction they were all Chow classes as well. One can see that the product classes give us a set of generators for the invariant part of the Chow group, i.e. the Chow group of the quotient. By the result of Theorem ~\ref{chow-coh-sur}, we know that these cycles will also give us a set of generators for the cohomology of the relative Hilbert scheme of points in the projective plane.

\begin{example}
If $n=1$ then the relative Hilbert scheme is just the Hilbert scheme of $\PP^2$, and we have: $\beta_i^{1}={\alpha_i}$ for $i=0,1$.

If $n=2$ the situation is not as simple as in the previous case. For example we have $\alpha_0[2]=\beta_0^1[2]$. Fix a line in the $\PP^2$ and move the fat point of multiplicity 2 along
this line. We get a family over $\PO$
with $\alpha_0[2]$ and $\beta_0^1[2]$ as the fibers above $0$ and $\infty$. But there are more complicated relations among these generators. Consider $\alpha_0 \alpha_1$, the locus of points in the Hilbert scheme of 2-points, where one point is supported in a given line and the other is supported in a given point. If we move the point defining $\alpha_0$ along a line towards
the special divisor, then we will get a family over $\PO$ with fiber over zero equal to the class that we started with, and the fiber over infinity
will be $\alpha_1 \beta_0^1 +\beta_0^1 \beta_0^1$. Since when the fixed point goes to the special divisor, the point with support in the line either is still
 in the $\PP^2 \setminus D$, or is supported at the intersection point of the line and the special divisor.

\end{example}

\subsubsection{Point-Bubble Relations}

The first type of relations is obtained by moving the points with support in $\PP^2 \setminus D$ to the first bubble. Given a cohomology class with a point supported on a zero-cycle $p$ or a line $\ell\neq D$. In the first case we pick a line in $\PP^2$ that passes through $p$ and move this point along it. This family has a natural projection to $\PO$ which is given by the locus of support of this point. This gives us a family over $\PO$ with fiber over zero being the class that we started with, and as we go toward infinity the point moves toward the special divisor.

 More precisely if we start with a cycle of the form $$\alpha=\displaystyle\mathop{\prod_{i\in A_2}}_{j \in A_1, k\in A_0}\alpha_2[i]\alpha_1[j]\alpha_0[k]. \beta$$ where $\beta=\displaystyle\prod_{i\in B}\beta_{a_i}^{n_i}[m_i]$ is the part with support in the bubbles, and $A_0,A_1,A_2$ are multisets, and $B$ is an index set. We move one of the points with support at a zero-cycle, which is represented by $\alpha_0[a]$, to the special divisor. 
 
 In other words consider a cycle $C$ in the class $$\displaystyle\mathop{\prod_{k\in A_0 \setminus \{a\}}}_{j \in A_1,i\in A_2 }\alpha_2[i]\alpha_1[j]\alpha_0[k] \alpha_1[a].\beta .$$ Let $\gamma$ be the representative of $\alpha_1$ that the point with multiplicity $a$ is supported over it. By considering the location of the point with multiplicity $a$ along $\gamma$ one gets a map from $C$ to $\PO$. The fiber over zero of the family is the cycle $\alpha$ that we started with. The fiber over infinity $C_{\infty}$ is obtained as the point with multiplicity $a$ goes to $D$. By the moduli description of the points of the relative Hilbert scheme the limit of the family is given as follows:
 
 Let $T$ be the expanded degeneration of $\PP^2$ which contains $C$. We blow up $T \times \PO$ along the subscheme $D\times \{ \infty \}$. The exceptional divisor is the projectivization of the normal bundle of $D\subset \mathbb{P}^2$. Hence we get a configuration with an extra bubble. The new bubble is attached to the base $\mathbb{P}^2$. Hence the index of all the bubbles in $\beta$  is shifted by one. For the points supported on the first bubble of $C_{\infty}$ we have:
 
 \begin{itemize}
 \item The point with multiplicity $a$ is moved to the special divisor. Hence we have a point supported over the intersection of  $D$ and $\gamma$ in the first bubble.
 \item Each point in $\PP^2 \setminus D$ which was supported on a line or on the whole plane might go to the the intersection of that cycle and $D$. If this point is supported on a line, it will be supported over the intersection of that line and the special divisor. If the point is supported on the whole plane (i.e. $\alpha_2[i]$) it will be supported on the whole special divisor.
 \end{itemize}

By looking at fibers over zero and infinity, in this case we get the following relation:

\[\displaystyle\mathop{\prod_{k\in A_0 \setminus \{a\}}}_{j \in A_1,i\in A_2 }\alpha_2[i]\alpha_1[j]\alpha_0[k] \alpha_0[a].\beta=\mathop{\prod_{k\in A_0 \setminus \{a\} }}_{j \in A_1,i\in A_2 }(\alpha_2[i]+\beta_1^1[i])(\alpha_1[j]+\beta_0^1[j])\alpha_0[k] \beta_0^1[a]. \beta^{+1}\]

\noindent where $\beta^{+q}=\displaystyle\prod_{i\in B}\beta_{a_i}^{n_i+q}[m_i]$ for $q \in \mathbb{N}$.

In the second case the point is supported on a line. We denote this line by $\ell$ and the intersection of this line with the special divisor by $p$. The projectivized tangent space at $p$ is $\PO$ and so we can rotate $\ell$ around $p$ in this $\PO$.  

 In  this case we move a point supported on a one-cycle to the special divisor. In other words we consider the cycle $C'$ in the class $$\displaystyle\mathop{\prod_{k\in A_0 \setminus \{a\}}}_{j \in A_1,i\in A_2 }\alpha_2[i]\alpha_1[j]\alpha_0[k] \alpha_2[a].\beta .$$ We swipe $\mathbb{P}^2$ by rotating the line $\ell$. By projection to the location of the point with multiplicity $a$ we land on a rotated copy of $\ell$. This way we get a family over $\PO$. The fiber over zero is the cohomology class that we started with. The fiber over infinity is computed as in the previous case. It consists of cycles with these properties:

 \begin{itemize}
 \item The point that we moved to the special divisor will go to a point on the first bubble supported on the whole line.
 \item Each point with support in the bubbles, say the $i^{th}$-bubble, will go to the next bubble ($(i+1)^{th}$-bubble). 
 \item A point in $\PP^2 \setminus D$ which was supported on a line or on the whole plane might go to the first bubble, and the the new point will be supported on a point or the whole divisor (respectively).
 \end{itemize}

we have the following relation:\vspace{2mm}

\[\displaystyle\mathop{\prod_{j \in A_1}}_{k\in A_0,i\in A_2 }\alpha_2[i]\alpha_1[j]\alpha_0[k].\alpha_1[a] \beta=\displaystyle\mathop{\prod_{j \in A_1\setminus \{a\}}}_{k\in A_0,i\in A_2 }(\alpha_2[i]+\beta_1[i])(\alpha_1[j]+\beta_0[j])\alpha_0[k].\beta_1[a] \beta^{+1}\]

\subsubsection{Point-Point Relations}

If we have two points supported on zero-cycles in one of the bubbles, we can move one of them towards the infinity.

We call these two points $P_1$ and $P_2$ and also assume that they are supported in the $i^{th}$ bubble Hence we start with a cycle that can be represented as 

$$\alpha_{pp}=\displaystyle\prod_{j \in A_1, k\in A_0}\beta_0^i[k]\beta_1^i[j].\alpha\beta^{(<i)}\beta^{(>i)}$$

\noindent where $\alpha$ is the part supported on the base, and $\beta^{(<i)}$ and $\beta^{(>i)}$ are the parts supported in bubbles with index smaller (larger) that $i$ (respectively)

Using the $\CS$ action on the $i^{th}$ bubble we can fix the support of the second point. More precisely, we know that $P_2$ is supported on the fiber above a  point of the special divisor. We call it $P_3 \in D$. We consider the points of the relative Hilbert scheme only modulo the $\CS$ action and the smooth part of the fiber is a copy of $\CS$. Hence, modulo this action, we can assume that $P_2$ is supported on a fixed point on the fiber above $P_3$. We call this fixed point $P_5$. This way we get a unique representative for any point of this cycle. Now the locus of $P_1$ is the fiber above another point which is a copy of $\PO$. Therefor by projection we get a family over $\PO$. By looking at the fiber above zero and infinity of this family we get the Point-Point relation. 

\begin{figure}[h]
\centerline{\includegraphics[scale=.45]{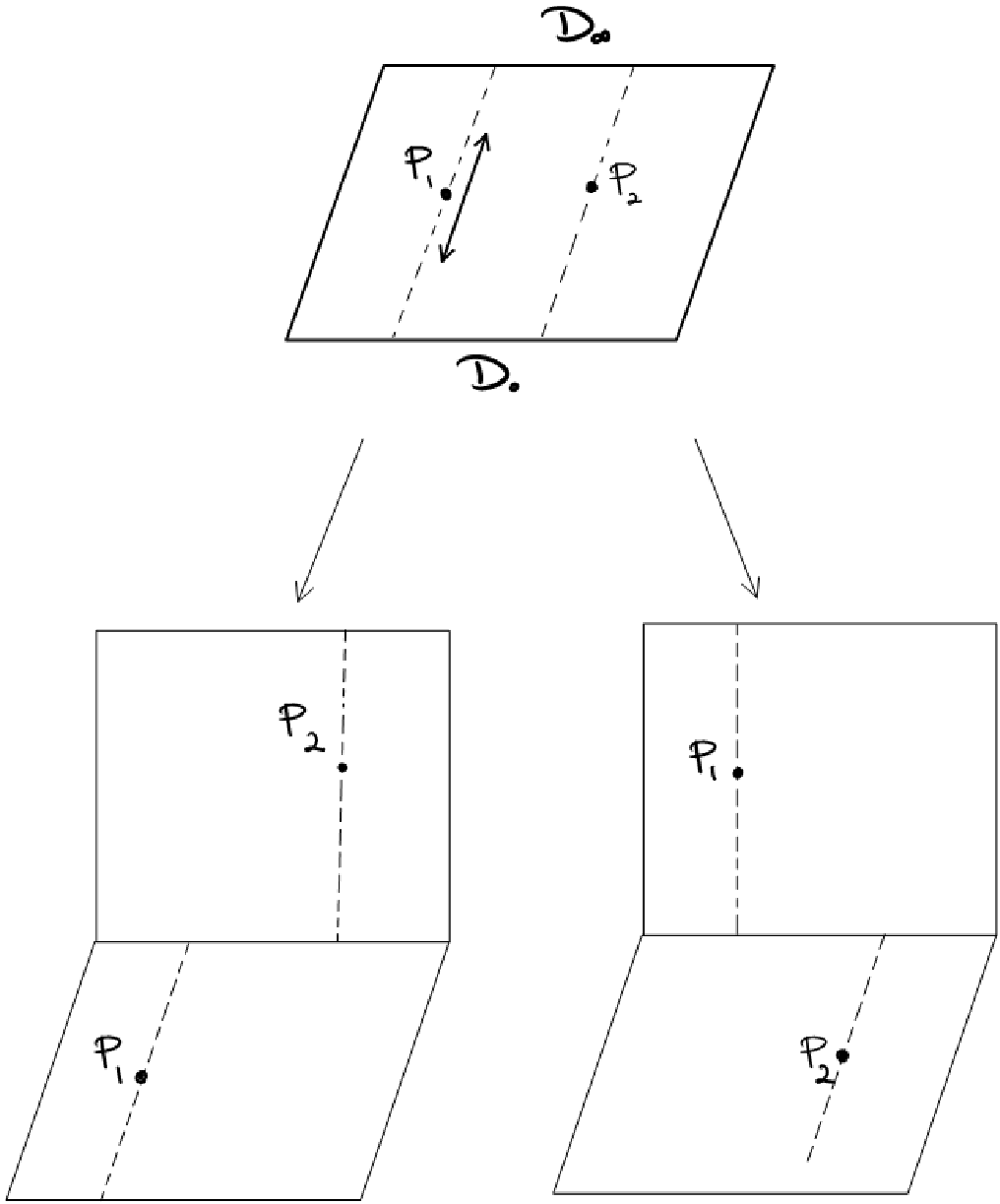}}
\end{figure}

We fix a parametrization of the fiber above $P_4$. The fiber of $\alpha_{pp}$ over $t\in \PO$ is the locus of points in $\alpha_{pp}$ where $P_1$ is supported over $t$ and $P_2$ is supported over $P_5$. Once again we use the fact that points of the relative Hilbert scheme are defined up to the $\CS$ action. The fiber of $\alpha_{pp}$ over $t\in \PO$ is also the locus of points where $P_1$ is supported over $1\in \PO$, $P_2$ is supported over $\frac{P_5}{t}$ and the locus of all the other points is multiplied by $\frac{1}{t}$. Hence in the fiber over zero $P_2$ goes to the infinity section i.e. the next bubble. Note that any point beside $P_1$ and $P_2$ is supported on the fiber over a point of $D$ or supported on the whole bubble. So when we multiply with $\frac{1}{t}$ it is supported on the same set. Hence in the fiber over zero this point might be supported on either of the $i^{th}$ bubble or the $i+1^{th}$ bubble.

The fiber above zero consists of cycles that we get by applying the following changes to cycle that we start with:

 \begin{itemize}
 \item $P_1$ will remain in the $i^{th}$ bubble. 
 \item $P_2$ will go to the $i+1^{th}$ bubble.
 \item The remaining points of the $i^{th}$ bubble might remain in that bubble or go to the next bubble.
 \item Each point with support in the bubbles with index greater than $i$ will go to the next bubble.
 \item Other points in $\PP^2 \setminus D$ or the bubbles with index less than $i$ will be in the same place.
 \end{itemize}

 The fiber above infinity is similar to the fiber above zero, with the role of $P_1$ and $P_2$ interchanged. Putting these together we arrive at the following relation:\vspace{3 mm}

   \[\displaystyle\prod_{j \in A_1, k\in A_0\setminus\{a,b\}}(\beta_0^i[k]+\beta_0^{i+1}[k])(\beta_1^i[j]+\beta_1^{i+1}[j])\beta_0^{i+1}[a]\beta_0^{i}[b].\alpha\beta^{(<i)}\beta^{(>i)+1}=\]

  \hspace{30mm}$\displaystyle\prod_{j \in A_1, k\in A_0\setminus\{a,b\}}(\beta_0^i[k]+\beta_0^{i+1}[k])(\beta_1^i[j]+\beta_1^{i+1}[j])\beta_0^i[a]\beta_0^{i+1}[b].\alpha\beta^{(<i)}\beta^{(>i)+1}$

 We will call this relation, the Point-Point relation associated to $$\alpha_{pp}=\displaystyle\prod_{j \in A_1, k\in A_0}\beta_0^i[k]\beta_1^i[j].\alpha\beta^{(<i)}\beta^{(>i)}$$ (the cycle that we start with).

\subsubsection{Point-Line Relations}

For each bubble the infinity section is a copy of $\PO$. The bubble is the projectivization of the $\mathcal{O}(1)$ over this $\PO$. This bundle has non-trivial sections, which we will fix one such section $s$. Consider the locus of points in the relative Hilbert scheme such that in the $i^{th}$ bubble there is a point $P_1$ supported on $s$, and another point $P_2$ supported on a fixed fiber of this bubble (and possibly other points in this bubble). We assume that this cycle is represented by:  $$\beta_1^{i}[a]\beta_0^{i}[b]\displaystyle\prod_{j \in A_1, k\in A_0}\beta_0^i[k]\beta_1^i[j].\alpha\beta^{(<i)}\beta^{(>i)} .$$

\begin{figure}[h]
\centerline{\includegraphics[scale=.5]{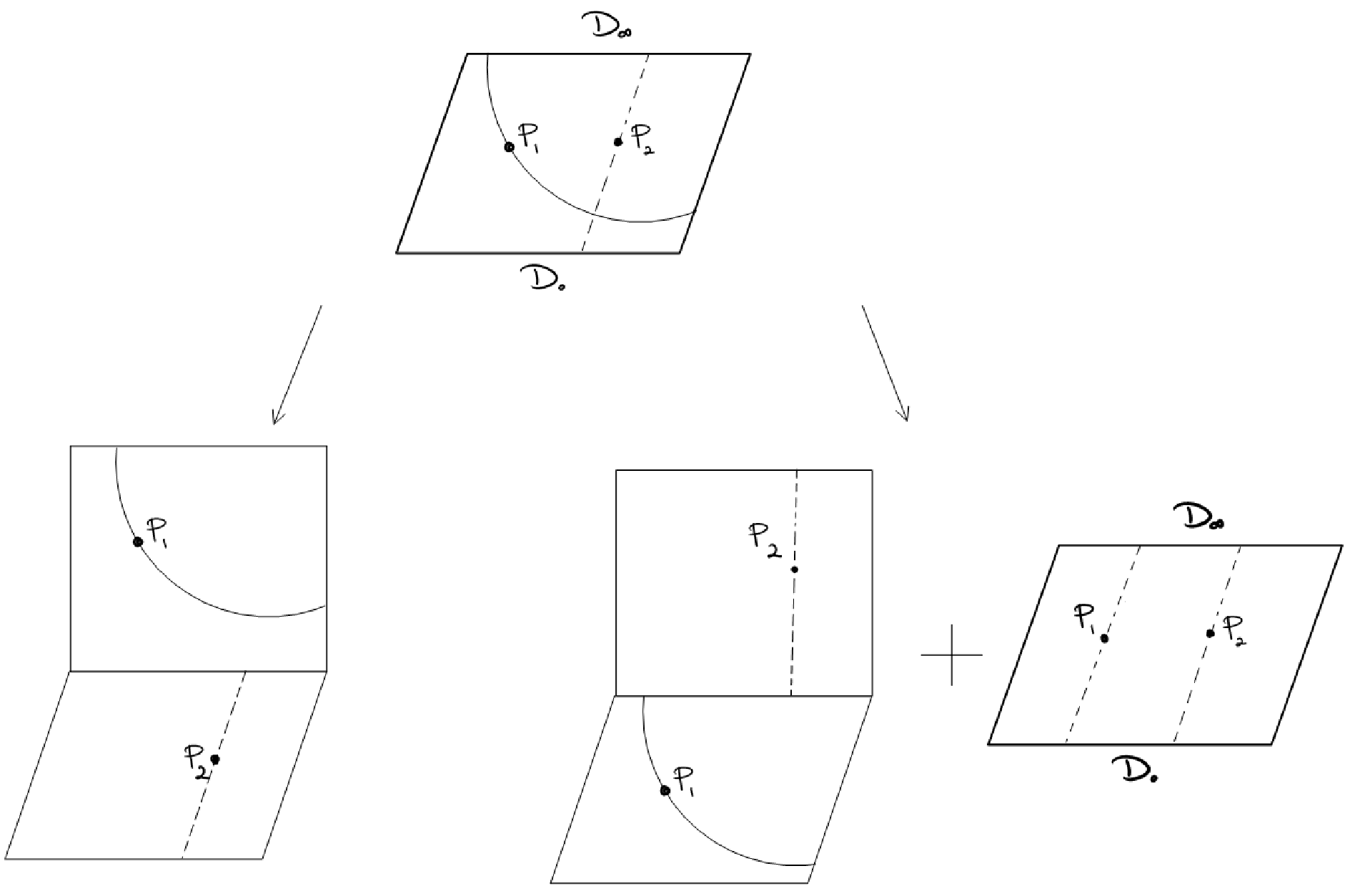}}
\end{figure}

By looking at the locus of $P_2$ we get a family over $\PO$.

When $P_2$ approaches the zero section of this bubble, by the $\CS$-action we can find a representative such that all the other points are being pushed to the infinity section.  This shows that the fiber over zero is a sum of cycles that we get by applying the following changes to the cycle that we start with:

\begin{itemize}
 \item $P_1$ will go to the $i+1^{th}$ bubble.
 \item $P_2$ will remain in the $i^{th}$ bubble.
 \item the remaining points of the $i^{th}$ bubble might remain in that bubble or go to the next bubble.
 \item Each point with support in the bubbles with index greater than $i$ will go to the next bubble.
 \item Other points in $\PP^2 \setminus D$ or the bubbles with index less than $i$ remain in the same place.
 \end{itemize}

 But the fiber over infinity is more complicated and in fact has two components. As $P_2$ goes to the infinity section either $P_1$ stays in the $i^{th}$-bubble which gives us the first component. The other possibility is for $P_1$ to go to the intersection of the infinity section and the section of $\mathcal{O}(1)$ that we fixed. In this case we get a point in the $i+1^{th}$ bubble supported on the fiber above the intersection of the fixed section and special divisor. But in this case all the remaining points should also go to the $i+1^{th}$ bubble, otherwise the resulting class will be of codimension two. So the $i^{th}$ bubble would be empty and we have to stabilize it by removing it. By this procedure we get the following relation:\vspace{3 mm}

 $\beta_1^{i+1}[a]\beta_0^{i}[b]\displaystyle\prod_{j \in A_1, k\in A_0}(\beta_0^i[k]+\beta_0^{i+1}[k])(\beta_1^i[j]+\beta_1^{i+1}[j]).\alpha\beta^{(<i)}\beta^{(>i)+1}=$

 \hspace{20mm}$\beta_1^i[a]\beta_0^{i+1}[b]\displaystyle\prod_{j \in A_1, k\in A_0}(\beta_0^i[k]+\beta_0^{i+1}[k])(\beta_1^i[j]+\beta_1^{i+1}[j]).\alpha\beta^{(<i)}\beta^{(>i)+1} + $

  \hspace{20mm}$ \beta_0^i[a]\beta_0^{i}[b]\displaystyle\prod_{j \in A_1, k\in A_0}\beta_0^i[k]\beta_1^i[j].\alpha\beta^{(<i)}\beta^{(>i)}$

  We will call this relation, the Point-Line relation associated to
  
   \[\beta_1^{i}[a]\beta_0^{i}[b]\displaystyle\prod_{j \in A_1, k\in A_0}\beta_0^i[k]\beta_1^i[j].\alpha\beta^{(<i)}\beta^{(>i)}\] .

\subsubsection{Line-line Relations}

Take a class with two points supported on one-cycles in the $i^{th}$-bubble. We represent this cycle by $$\beta_1^{i}[a]\beta_1^{i}[b]\displaystyle\prod_{j \in A_1, k\in A_0}\beta_0^i[k]\beta_1^i[j].\alpha\beta^{(<i)}\beta^{(>i)} .$$ Fix two sections $s_1$ and $s_2$ of the the $i^{th}$-bubble . Given a section of a line bundle, we can get other sections by multiplying this section by a complex number. In this way to each pair $(\lambda_1,\lambda_2)$ of non zero complex number we can associate the locus of points in the relative Hilbert scheme with two points $P_1$ and $P_2$ supported on $\lambda_1 s_1$ and $\lambda_2 s_2$ (resp), and the arrangement of the rest of the points are as in the cycle that we start with. Since we have the $\CS$ action on each bubble, the associated locus only depends on the ratio of $\lambda_1$ and $\lambda_2$. So in this way we get a family over $\CS$. Since the relative Hilbert scheme is proper we can extend this family to a family over $\PO$.

  \begin{figure}[h]
\centerline{\includegraphics[scale=.35]{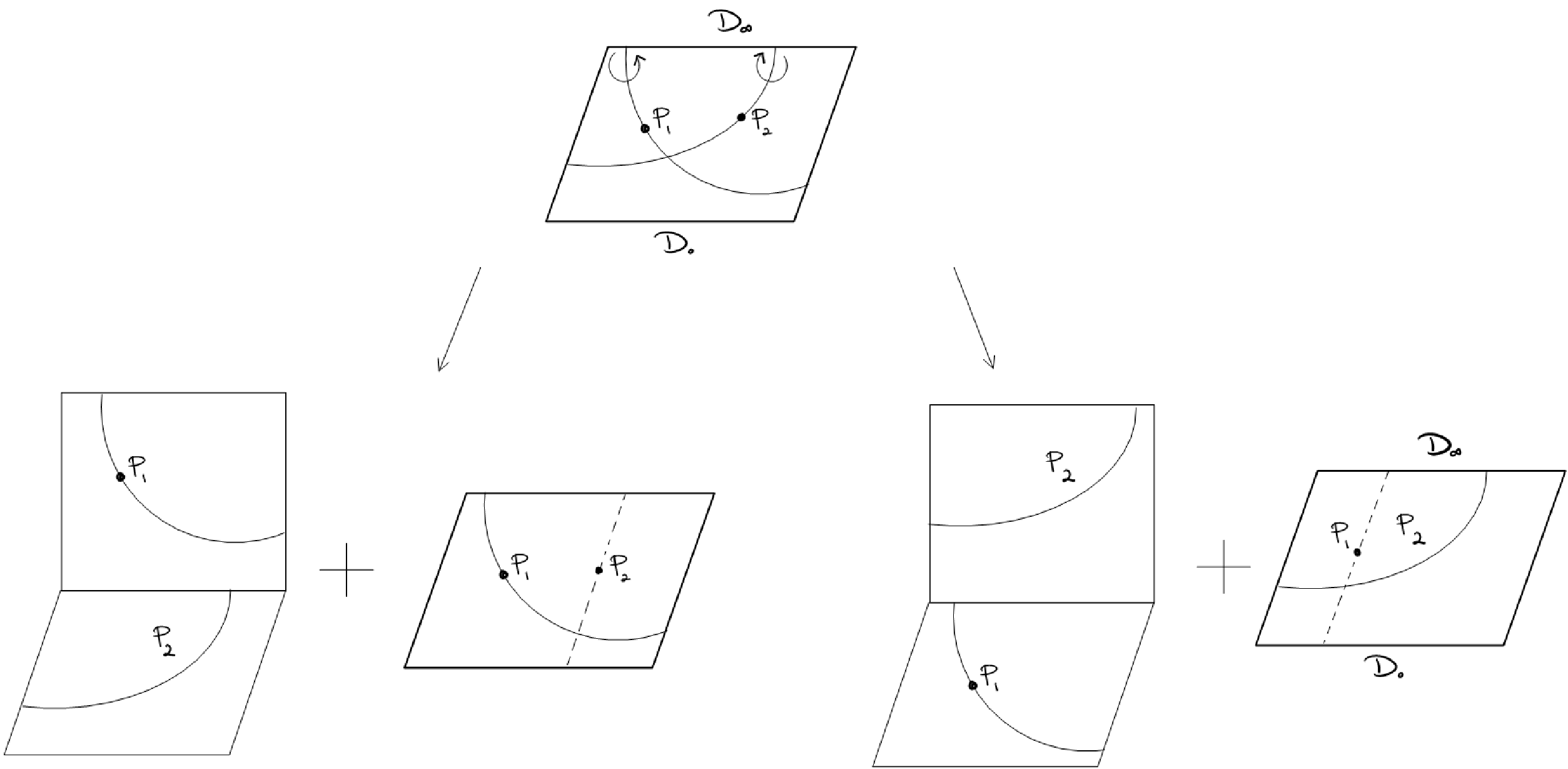}}
\end{figure}

 We are interested in the fiber above zero and infinity. The fiber above zero consist of points satisfying the following properties:
\begin{itemize}
 \item $P_1$ will go to the $i+1^{th}$ bubble, and is supported on the whole bubble.
 \item $P_2$ will remain in the $i^{th}$ bubble, and is supported on the whole bubble.
 \item the remaining points of the $i^{th}$ bubble might remain in that bubble or go to the next bubble.
 \item Each point with support in the bubbles with index greater than $i$ will go to the next bubble.
 \item Other points in $\PP^2 \setminus D$ or the bubbles with index less than $i$ will remain in the same place.
 \end{itemize}

\noindent and similarly the fiber above infinity consists of points with the same properties with the role of $P_1$ and $P_2$ interchanged. Considering this family we get the following relation:\vspace{3 mm}

 $\beta_1^{i+1}[a]\beta_1^{i}[b]\displaystyle\prod_{j \in A_1, k\in A_0}(\beta_0^i[k]+\beta_0^{i+1}[k])(\beta_1^i[j]+\beta_1^{i+1}[j]).\alpha\beta^{(<i)}\beta^{(>i)+1} + $

 $\beta_1^{i}[a]\beta_0^{i}[b]\displaystyle\prod_{j \in A_1, k\in A_0}\beta_0^i[k]\beta_1^i[j].\alpha\beta^{(<i)}\beta^{(>i)}=$

 \hspace{20mm}$\beta_1^{i}[a]\beta_1^{i+1}[b]\displaystyle\prod_{j \in A_1, k\in A_0}(\beta_0^i[k]+\beta_0^{i+1}[k])(\beta_1^i[j]+\beta_1^{i+1}[j]).\alpha\beta^{(<i)}\beta^{(>i)+1} + $

  \hspace{20mm}$\beta_0^{i}[a]\beta_1^{i}[b]\displaystyle\prod_{j \in A_1, k\in A_0}\beta_0^i[k]\beta_1^i[j].\alpha\beta^{(<i)}\beta^{(>i)}$

We will call this relation the Line-Line relation associated to 

\[\beta_1^{i}[a]\beta_1^{i}[b]\displaystyle\prod_{j \in A_1, k\in A_0}\beta_0^i[k]\beta_1^i[j].\alpha\beta^{(<i)}\beta^{(>i)}\]

So far we introduced the set of generators for the cohomology of the relative Hilbert scheme of points in the projective plane and also described four kind of relations in this group. In the following theorem we show that they are all the relations.

\begin{theorem}
The cohomology groups of ${\PP^2}_{\PO}^{[n]}$, are generated by the product cycles, and the four types of relations introduced in this section will give us a complete set of relations. 
\end{theorem}

The first part of the theorem is a consequence of Theorem ~\ref{chow-coh-sur}, and the construction of the product classes. In fact we have a stratification of the relative Hilbert scheme with pieces each isomorphic to a quotient of a product of Hilbert scheme of points on a surface. By Nakajima's theorem we know that the cohomology of these Hilbert schemes is generated by the product classes. Hence by Theorem 21 the Chow group of them is also generated by the product classes. This shows that the Chow group of the relative Hilbert scheme of points is generated by product classes. Finally by Theorem ~\ref{chow-coh-sur} the cohomology group of the relative Hilbert scheme is generated by these classes. 

In the next section we give a proof of the second part.

\section{Proof of Theorem ~\ref{relationtheorem}}

In this section we compute the dimension of space generated by the cohomology classes that we introduced in previous section modulo the relations. We deal with each type of relation separately.

\subsection{Point-Bubble Relations}

Pick a cohomology class $\alpha$ of the relative Hilbert scheme of points. Push all the points supported on cycles with dimension 0 and 1 in the $\PP^2 \setminus D$ to the bubbles. We get a representation of $\alpha$ in terms of cohomology classes with no $\alpha_0$ and $\alpha_1$ in their representation. There is one point that we want to clarify before going any further. If we start with a class that has more than one point supported on a zero cycle or a line in the $\PP^2 \setminus D$ , then we have more than one ways of writing this class in term of classes with no such points. More precisely, pick two such zero cycles $Z_1$ and $Z_2$. Pushing $Z_1$ to the bubble gives an expression (by the Point-Bubble relation), also pushing $Z_2$ to the bubble gives another expression.

\begin{lemma}\label{lemma1}
The above two expressions can be obtained from each other using other types of relations.
\end{lemma}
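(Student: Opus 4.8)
The plan is to reduce the statement to the comparison of two pushing procedures that differ by a single transposition of consecutive pushes, and then to match the resulting discrepancy, case by case, against exactly one of the three remaining relations. First I would observe that any two orders in which the movable base points (those supported on $\alpha_0$ or $\alpha_1$ factors in $\PP^2\setminus D$) are pushed into the bubbles differ by a finite sequence of transpositions of adjacent pushes; since ``obtainable from each other by the relations'' is an equivalence relation, it suffices to handle one such swap. In a single swap only the two exchanged points, say $P$ and $Q$, play an active role: every point pushed at an earlier stage already lives in some bubble and is merely shifted by $\beta^{+1}$ by the two pushes in question, while every point still living on the base contributes the same binomial factors $(\alpha_2[i]+\beta_1[i])$ and $(\alpha_1[j]+\beta_0[j])$ regardless of the order. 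These spectator contributions agree for the two orderings up to bubble index, so after cancelling them the whole comparison collapses to the two-point situation sitting in an inert spectator background.

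The heart of the argument is then a case analysis on the types of $P$ and $Q$, and the three cases correspond precisely to the three remaining relations. In the case where $P$ and $Q$ are both supported on $\alpha_0$, one order produces one of them as $\beta_0^1$ in the first bubble and the other, after a shift, as $\beta_0^2$ in the second bubble, while the opposite order interchanges these two bubble indices; the difference between the two expressions is exactly the Point-Point relation associated to $\prod_{j\in A_1,\,k\in A_0}\beta_0^i[k]\beta_1^i[j]$. In the mixed case, with $P$ on $\alpha_0$ and $Q$ on $\alpha_1$, the asymmetry of the pushing relation is essential: a line pushed \emph{directly} (by rotating it about its intersection with $D$) produces a whole-fiber class $\beta_1$, whereas the same line \emph{dragged along} while pushing a point meets $D$ in a single point and produces $\beta_0[\cdot]$. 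Comparing the two orders therefore yields, besides the expected two whole-fiber terms, an additional term in which the $i^{\text{th}}$ bubble is emptied and must be stabilized away; this is exactly the extra second summand appearing in the Point-Line relation. The case with both $P$ and $Q$ on $\alpha_1$ is analogous, the discrepancy being an instance of the Line-Line relation, again including the correction term that records the stabilization of an emptied bubble.

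The main obstacle will be the bookkeeping of bubble indices. Each application of the pushing relation shifts all existing bubble content up by one and replaces every spectator factor by a ``stay or move'' sum, so I must verify that after cancelling the common spectator terms the surviving difference is an honest instance of the relevant relation, with its lower-order correction term matching on the nose. A clean way to organize this, and a useful geometric cross-check, is to push $P$ and $Q$ simultaneously: this gives a two-parameter degeneration family, and the two iterated one-parameter limits realizing the two pushing orders are precisely the two boundary strata of this family, so the equality of the two limiting classes \emph{is} the relation. I expect the mixed $(\alpha_0,\alpha_1)$ and the $(\alpha_1,\alpha_1)$ cases to be the delicate ones, because the $\beta_1$-versus-$\beta_0$ asymmetry forces the appearance of the correction terms, and one must check that these are exactly reproduced by the Point-Line and Line-Line relations rather than by some further identity.
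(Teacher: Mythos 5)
Your proposal is correct and takes essentially the same route as the paper: after reducing to a single swap of two pushed points, you resolve the three cases (point--point, point--line, line--line) by invoking, respectively, the Point-Point, Point-Line, and Line-Line relation associated to the intermediate class in which both points have been pushed into the first bubble, which is precisely the paper's three-case verification, including the correction terms in the mixed and line--line cases. Your explicit reduction to adjacent transpositions (left implicit in the paper, whose proof treats only two pushed points) and the two-parameter degeneration cross-check are harmless additions that do not change the substance of the argument.
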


\begin{proof}
There are three cases that we have to consider:

1. There are two points supported on a zero-cycle in the $\PP^2 \setminus D$ with different multiplicities. If we first push one of them to the bubble and then the other we get a presentation of the original cycle in terms of cycles with fewer points supported in the $\PP^2 \setminus D$. If we push them to the bubble with a different order we get another presentation. Let $$\alpha=\displaystyle\mathop{\prod_{i\in A_2}}_{j \in A_1, k\in A_0}\alpha_2[i]\alpha_1[j]\alpha_0[k].\alpha_0[a]\alpha_0[b] \beta$$ be such a cycle, then we get the following relations:\vspace{3mm}

  $\left \{ \begin{array}{lll}
 \alpha&=&\displaystyle\mathop{\prod_{i\in A_2}}_{j \in A_1, k\in A_0}(\alpha_2[i]+\beta_1^1[i]+\beta_1^2[i])(\alpha_1[j]+\beta_0^1[j]+\beta_0^2[j])\alpha_0[k].\beta_0^1[a]\beta_0^2[b] \beta^{+2}\\
 \alpha&=&\displaystyle\mathop{\prod_{i\in A_2}}_{j \in A_1, k\in A_0}(\alpha_2[i]+\beta_1^1[i]+\beta_1^2[i])(\alpha_1[j]+\beta_0^1[j]+\beta_0^2[j])\alpha_0[k].\beta_0^2[a]\beta_0^1[b] \beta^{+2}
 \end{array} \right. $ \vspace{3mm}

Take the following class:

\[\displaystyle\mathop{\prod_{i\in A_2}}_{j \in A_1, k\in A_0}(\alpha_2[i]+\beta_1^1[i])(\alpha_1[j]+\beta_0^1[j])\alpha_0[k].\beta_0^1[a]\beta_0^1[b] \beta^{+1}\]

\noindent it has two points in the first bubble supported on zero-cycles. The above two expressions are obtained from each other using the Point-Point relation associated to this cycle.

2. There is a point supported on a zero-cycle and another point supported on a one-cycle. We get two different presentation of the cycle by pushing these points to the bubble with different orders. Let $$\alpha=\displaystyle\mathop{\prod_{i\in A_2}}_{j \in A_1, k\in A_0}\alpha_2[i]\alpha_1[j]\alpha_0[k].\alpha_1[a]\alpha_0[b] \beta$$ be the cycle that we start with. As in the previous case we get the following relations: \vspace{3mm}

$\left \{ \begin{array}{lll}
 \alpha&=&\displaystyle\mathop{\prod_{i\in A_2}}_{j \in A_1, k\in A_0}(\alpha_2[i]+\beta_1^1[i]+\beta_1^2[i])(\alpha_1[j]+\beta_0^1[j]+\beta_0^2[j])\alpha_0[k].\beta_1^1[a]\beta_0^2[b] \beta^{+2}\\
 & & + \displaystyle\mathop{\prod_{i\in A_2}}_{j \in A_1, k\in A_0}(\alpha_2[i]+\beta_1^1[i])(\alpha_1[j]+\beta_0^1[j])\alpha_0[k].\beta_0^1[a]\beta_0^1[b] \beta^{+1}\\
 \alpha&=&\displaystyle\mathop{\prod_{i\in A_2}}_{j \in A_1, k\in A_0}(\alpha_2[i]+\beta_1^1[i]+\beta_1^2[i])(\alpha_1[j]+\beta_0^1[j]+\beta_0^2[j])\alpha_0[k].\beta_1^2[a]\beta_0^1[b] \beta^{+2}
 \end{array} \right. $ \vspace{3mm}

In this case if we take this class:

\[\displaystyle\mathop{\prod_{i\in A_2}}_{j \in A_1, k\in A_0}(\alpha_2[i]+\beta_1^1[i])(\alpha_1[j]+\beta_0^1[j])\alpha_0[k].\beta_1^1[a]\beta_0^1[b] \beta^{+1}\]

\noindent the Point-Line relation associated to this cycle shows that the above two cohomology classes are equal.

3. There are two points supported on one-cycles and with different multiplicities. In the same way by pushing them to the bubble with different orders we get the following relations:\vspace{3mm}

If $$\alpha=\displaystyle\mathop{\prod_{i\in A_2}}_{j \in A_1, k\in A_0}\alpha_2[i]\alpha_1[j]\alpha_0[k].\alpha_1[a]\alpha_1[b] \beta$$

$\left \{ \begin{array}{lll}
 \alpha&=&\displaystyle\mathop{\prod_{i\in A_2}}_{j \in A_1, k\in A_0}(\alpha_2[i]+\beta_1^1[i]+\beta_1^2[i])(\alpha_1[j]+\beta_0^1[j]+\beta_0^2[j])\alpha_0[k].(\beta_1^1[a]\beta_1^2[b]+\beta_0^1[a]\beta_1^1[b]) \beta^{+2}\\
 \alpha&=&\displaystyle\mathop{\prod_{i\in A_2}}_{j \in A_1, k\in A_0}(\alpha_2[i]+\beta_1^1[i]+\beta_1^2[i])(\alpha_1[j]+\beta_0^1[j]+\beta_0^2[j])\alpha_0[k].(\beta_1^2[a]\beta_1^1[b]+\beta_1^1[a]\beta_0^1[b]) \beta^{+2}
 \end{array} \right. $ \vspace{3mm}

 In this case we take:

\[\displaystyle\mathop{\prod_{i\in A_2}}_{j \in A_1, k\in A_0}(\alpha_2[i]+\beta_1^1[i])(\alpha_1[j]+\beta_0^1[j]+)\alpha_0[k].\beta_1^1[a]\beta_1^1[b] \beta^{+1}\]

The Line-Line relation associated to this cycle shows that the above two cohomology classes are equal.
\end{proof}

\subsection{Canonical and Normal Forms}

As we discussed in the previous section given a cycle with more than one point supported on a bubble we can write down a relation in the Chow group associated to this cycle. Here we show that using these relations we can represent any cycle in terms of cycles with the \emph{canonical} form.

\begin{lemma} \label{canonical}
Each cohomology class can be represented in terms of classes satisfying the following conditions, only by using Point-Line and Line-Line relations:
\begin{itemize}
 \item There is at most one point supported on a zero-cycle in each bubble.
 \item That point (if exists) has the minimum multiplicity among the points in that bubble.
 \end{itemize}
 
 A representation of a class that satisfies these properties is called the \emph{canonical} form.
 \end{lemma}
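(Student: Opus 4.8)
The plan is to read the two relations as rewriting rules and argue by well-founded induction. To each product cycle I would attach the lexicographic complexity $(N,M)$, where $N$ is the total number of points supported on zero-cycles, summed over all bubbles, and $M=\sum_i\sum_{c}(c-m_i)$ records, bubble by bubble, the total amount by which the multiplicity $c$ of each zero-point in bubble $i$ exceeds the least multiplicity $m_i$ occurring among all points of that bubble. A cycle is in \emph{canonical} form exactly when it has at most one zero-point per bubble and that zero-point is of minimal multiplicity; I would fire a reduction only at a bubble where one of these two conditions fails, and show that the resulting $\mathbb{Q}$-combination consists of cycles of strictly smaller complexity, so that iterating terminates on canonical cycles. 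Throughout, the common left factor $\alpha\,\beta^{(<i)}$ is suppressed, and I abbreviate $Q_i=\prod\beta_0^i[k]\beta_1^i[j]$ and $P_{i,i+1}=\prod(\beta_0^i[k]+\beta_0^{i+1}[k])(\beta_1^i[j]+\beta_1^{i+1}[j])$.

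To enforce minimality I would use the Line-Line relation. If bubble $i$ carries a zero-point $\beta_0^i[b]$ together with a line-point $\beta_1^i[a]$ of smaller multiplicity $a<b$, solving the associated Line-Line relation for the offending term gives
\[
\beta_1^i[a]\beta_0^i[b]\,Q_i\,\beta^{(>i)}=\beta_0^i[a]\beta_1^i[b]\,Q_i\,\beta^{(>i)}+\bigl(\beta_1^i[a]\beta_1^{i+1}[b]-\beta_1^{i+1}[a]\beta_1^i[b]\bigr)P_{i,i+1}\,\beta^{(>i)+1}.
\]
The first term on the right keeps a single zero-point in bubble $i$ but of the smaller multiplicity $a$, so it leaves $N$ unchanged and strictly lowers $M$; each correction term replaces the active pair by two line-points, hence has one fewer zero-point and strictly smaller $N$. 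Every summand is therefore strictly smaller, and iterating drives each bubble's zero-point down to minimal multiplicity.

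To enforce the bound of one zero-point per bubble I would use the Point-Line relation, solved for its two–zero-point term,
\[
\beta_0^i[a]\beta_0^{i+1}[b]\,Q_i\,\beta^{(>i)}=\beta_1^{i+1}[a]\beta_0^i[b]\,P_{i,i+1}\,\beta^{(>i)+1}-\beta_1^i[a]\beta_0^{i+1}[b]\,P_{i,i+1}\,\beta^{(>i)+1},
\]
where the left-hand cycle has two zero-points sitting over bubbles $i$ and $i+1$ while each summand on the right converts one of them into a line-point and so strictly lowers $N$. When a bubble $i$ carries several zero-points, I would first bring two of them into this adjacent pattern — using the zero-count-preserving part of the Point-Line relation to migrate a zero-point up into bubble $i+1$ when a line-point is available to pair with it — and then apply the displayed identity to delete one zero-point from bubble $i$.

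The hard part will be the termination bookkeeping. Both relations spread the untouched points of bubble $i$ over bubbles $i$ and $i+1$ through the factor $P_{i,i+1}$ and shift every higher bubble up by one, so I must verify that $(N,M)$ drops simultaneously on all of the resulting summands and that the corrections deposited into bubble $i+1$ cannot feed an infinite regress. The feature that makes this go through is that corrections always migrate to strictly higher bubbles, and a length-$n$ subscheme occupies at most $n$ bubbles, so terms that would require an $(n+1)$-st bubble vanish; processing the bubbles from the bottom upward then terminates. The genuinely delicate case — and the place where the absence of the Point-Point relation is felt most — is reducing a cluster of zero-points concentrated in a single bubble that has no line-point and no zero-point in the next bubble to pair against; arranging such a cluster into the adjacent pattern demanded by the Point-Line identity without ever increasing $N$ is the crux of the argument.
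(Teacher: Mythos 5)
Your reduction engine is aimed at the wrong identity, and the gap you yourself flag as the ``crux'' --- a cluster of several zero-points in a single bubble with no line-point and no zero-point in the next bubble to pair against --- is precisely the main case of the lemma, so as written the argument is incomplete. The source of the trouble is that you took the Point-Line relation in the form whose two--zero-point term reads $\beta_0^i[a]\beta_0^{i+1}[b]$, with the zero-points in \emph{adjacent} bubbles. That superscript $i+1$ in the displayed relation of Section 3.1.3 is a typo: in the second component of the fiber over infinity, once the emptied $i$-th bubble is stabilized away, $P_1$, $P_2$ and all the spectator points land in one and the same bubble, so the term is $\beta_0^i[a]\beta_0^{i}[b]\prod\beta_0^i[k]\beta_1^i[j]\cdot\alpha\beta^{(<i)}\beta^{(>i)}$ --- and this is the form the paper actually uses in its proofs of Lemmas \ref{canonical} and \ref{normal}. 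A dimension count confirms it: taking no spectators, the terms $\beta_1^{i+1}[a]\beta_0^i[b]$ and $\beta_1^i[a]\beta_0^{i+1}[b]$ each have dimension $a+b-3$ (punctual contributions $a-1$ and $b-1$, support dimensions $1$ and $0$, and two occupied bubbles each costing one for the $\CS$ quotient), which matches $\beta_0^i[a]\beta_0^i[b]$ in a single bubble, whereas $\beta_0^i[a]\beta_0^{i+1}[b]$ with two occupied bubbles would have dimension $a+b-4$ and could not appear in the same rational equivalence.

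With the corrected relation the proof closes along exactly the lines you set up. Solving for the two--zero-point term gives, in your abbreviations,
\begin{equation*}
\beta_0^i[a]\beta_0^{i}[b]\,Q_i\,\beta^{(>i)}
=\bigl(\beta_1^{i+1}[a]\beta_0^{i}[b]-\beta_1^i[a]\beta_0^{i+1}[b]\bigr)\,P_{i,i+1}\,\beta^{(>i)+1},
\end{equation*}
and every summand on the right converts one of the pair $a,b$ into a line-point while the spectators keep their types, so your count $N$ drops by exactly one in every term, with no line-point and no adjacent zero-point needed; the delicate cluster case evaporates, and the auxiliary ``migration'' step you proposed is unnecessary. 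Your Line-Line step for enforcing minimality coincides with the paper's, and your lexicographic $(N,M)$ induction then yields termination --- indeed more cleanly than the paper, which fixes bubbles one by one from the bottom and appeals only to finiteness of the number of bubbles (note also that you do not need terms requiring an $(n+1)$-st bubble to vanish; the strict drop in $(N,M)$ already terminates the process). So, modulo the misread relation, your framework is essentially the paper's argument made quantitative; as it stands, however, the unhandled same-bubble cluster is a genuine gap.
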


\begin{proof}
 We start with a class $C$ that does not satisfy the above properties. Take $i$ to be the index of the first bubble that is not of that form. We recall that the Point-Line relation can be written as:\vspace{3mm}

  $\beta_0^i[a]\beta_0^{i}[b]\displaystyle\prod_{j \in A_1, k\in A_0}\beta_0^i[k]\beta_1^i[j] .\alpha\beta^{(<i)}\beta^{(>i)}=$

\hspace{20mm}$\beta_1^{i+1}[a]\beta_0^{i}[b]\displaystyle\prod_{j \in A_1, k\in A_0}(\beta_0^i[k]+\beta_0^{i+1}[k])(\beta_1^i[j]+\beta_1^{i+1}[j]).\alpha\beta^{(<i)}\beta^{(>i)+1}- $

 \hspace{20mm}$\beta_1^i[a]\beta_0^{i+1}[b]\displaystyle\prod_{j \in A_1, k\in A_0}(\beta_0^i[k]+\beta_0^{i+1}[k])(\beta_1^i[j]+\beta_1^{i+1}[j]).\alpha\beta^{(<i)}\beta^{(>i)+1} $

  \noindent all the terms in the right hand side have the same point arrangement in the first $i-1$ bubbles. Using the Point-Line relation we can write $C$ in terms of classes with fewer number of points supported on zero cycles in the $i^{th}$ bubble. So by this procedure we can fix bubbles one by one, and since the number of bubbles is finite (at most the number of points) this procedure will end at some point. Hence we can write any class in terms of classes satisfying the first condition.

 In the next step we use the Line-Line relations to write them in terms of classes that satisfy both conditions of the claim. More precisely we can write the Line-Line relation as:\vspace{3mm}
  
 $\beta_1^{i}[a]\beta_0^{i}[b]\displaystyle\prod_{j \in A_1}\beta_1^i[j].\alpha\beta^{(<i)}\beta^{(>i)}=\beta_0^{i}[a]\beta_1^{i}[b]\displaystyle\prod_{j \in A_1}\beta_1^i[j].\alpha\beta^{(<i)}\beta^{(>i)} + $

 \hspace{59mm}$\beta_1^{i}[a]\beta_1^{i+1}[b]\displaystyle\prod_{j \in A_1}(\beta_1^i[j]+\beta_1^{i+1}[j]).\alpha\beta^{(<i)}\beta^{(>i)+1} - $

 \hspace{59mm}$\beta_1^{i+1}[a]\beta_1^{i}[b]\displaystyle\prod_{j \in A_1}(\beta_1^i[j]+\beta_1^{i+1}[j]).\alpha\beta^{(<i)}\beta^{(>i)+1} .$

 It is clear from the form of these relations that the cohomology classes that we get satisfy both conditions in the first $i$ bubbles.

\end{proof}

\begin{lemma}\label{normal}
Given a class in the canonical form, we can write it as the sum of cycles with the following properties:
\begin{itemize}
 \item Each cycle is in the canonical form
 \item If there is a bubble with exactly one point supported on a zero-cycle (and no point supported on a one-cycle), then the multiplicity of that point is less than or equal to the multiplicity of any point on the next bubble. 
\end{itemize}

A representation of a class that satisfies these properties is called the \emph{normal} form.
\end{lemma}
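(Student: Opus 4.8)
The plan is to reach the normal form by sorting the bubbles from the bottom of the tower upward, using the Point-Point relation to reorder pure zero-cycle points and the Point-Line relation to deal with a simple bubble sitting below a smaller one-cycle point; after each step I restore the canonical form by Lemma~\ref{canonical}. The first observation is that, by the canonical form, the unique zero-cycle point of a bubble already has the least multiplicity occurring in that bubble, so to test the normal-form inequality at a simple bubble $i$ of multiplicity $m$ it is enough to compare $m$ with the smallest multiplicity $\ell$ present in bubble $i+1$, whether that $\ell$ sits on a zero-cycle point or on a one-cycle point.

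Given a canonical class not yet in normal form, I take the least index $i$ at which a simple bubble violates the inequality, so $\ell<m$. If $\ell$ is carried by a zero-cycle point I apply the Point-Point relation at position $i$; in the basic case where bubble $i+1$ is also simple this is the clean exchange $\beta_0^i[m]\beta_0^{i+1}[\ell]\cdots=\beta_0^i[\ell]\beta_0^{i+1}[m]\cdots$, replacing $m$ in bubble $i$ by the smaller $\ell$ and moving $m$ one bubble up. If $\ell$ sits on a one-cycle point I apply the Point-Line relation at position $i$, which writes the class as a sum of one term carrying a one-cycle point in bubble $i$, so that the condition at $i$ becomes vacuous, and one term $\beta_0^i[\ell]\beta_0^{i+1}[m]\cdots$ in which bubble $i$ carries the smaller multiplicity $\ell$ as a zero-cycle point.

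For termination I would order canonical cycles by a two-level complexity. The primary level is the number of one-cycle points; the Point-Line relation turns a one-cycle point in bubble $i+1$ into a zero-cycle point in its lower summand, so that summand has strictly fewer one-cycle points and is handled by the outer induction. The secondary level, used for cycles with a fixed number of one-cycle points, is the weight $\sum_k k\, m_k$, where $m_k$ is the total multiplicity in bubble $k$; this weight is bounded above by $n^2$, and both the clean Point-Point exchanges and the one-cycle-preserving summand of the Point-Line relation strictly increase it by moving the larger multiplicity $m$ to a higher bubble. Since within each stratum the weight cannot increase indefinitely, the rewriting terminates, and on termination every surviving summand is canonical and satisfies the normal-form inequality at each simple bubble.

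The main obstacle is the bookkeeping hidden in these rewrites. The Point-Point and Point-Line relations equate \emph{sums} of cycles, since the factors $\prod(\beta_0^i[k]+\beta_0^{i+1}[k])(\beta_1^i[j]+\beta_1^{i+1}[j])$ distribute the spectator points freely between bubbles $i$ and $i+1$ and the index shift $\beta^{(>i)+1}$ relabels the tower above; moreover pushing $m$ into bubble $i+1$ may spoil the canonical form there, forcing another pass of Lemma~\ref{canonical}. I must therefore verify that redistributing spectators and relabelling the upper tower never lowers the weight inside a stratum, that the re-canonicalization does not raise the number of one-cycle points, and that the two-level complexity genuinely descends through all of these operations, so that the induction is well founded.
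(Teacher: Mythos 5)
Your rewriting step is essentially the paper's: the same two cases (the minimal multiplicity $\ell$ in bubble $i+1$ carried by a zero-cycle point versus by a one-cycle point), handled respectively by a Point-Point and a Point-Line relation associated to the cycle with everything placed in bubble $i$, with the observation that the correction terms are harmless at bubble $i$ because they put a one-cycle spectator there. The genuine gap is in termination, exactly at the point you flag in your final paragraph, and the needed verification fails. When you solve either relation for the bad class --- which is the expansion term of $\prod_j\left(\beta_1^i[j]+\beta_1^{i+1}[j]\right)$ in which \emph{every} spectator sits in bubble $i+1$ --- the remaining expansion terms (the paper's $\prod^{\sim}$) each have at least one spectator in bubble $i$, i.e.\ one level \emph{lower} than in the bad class. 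These terms preserve the number of one-cycle points, so they stay in the same primary stratum, yet they strictly \emph{decrease} your weight $\sum_k k\,m_k$ by the total multiplicity of the moved-down spectators; the compensating upward transfer of $m$ occurs only in half of the correction terms and may be smaller than this loss. So within a stratum your weight can go down as well as up, the two-level complexity is not monotone, boundedness by $n^2$ buys nothing, and the induction is not well founded as stated: without a monotone quantity you cannot exclude a cycle of rewrites reproducing one another.

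The paper closes this hole with a different potential: the count $A(\alpha)$ of pairs $(p,i)$ where $p$ is a zero-cycle point \emph{alone} in its bubble and $i$ is the index of a higher bubble containing a point of multiplicity strictly less than $\mathrm{mult}(p)$. This count does descend through all the summands: the clean swap replaces $m$ by the smaller $\ell$ (which, being minimal in the old bubble $i+1$, creates no new offending pairs) and kills the pair $(m,i+1)$; every $\prod^{\sim}$ correction term places a one-cycle spectator alongside the bubble-$i$ point, so that point is no longer alone and contributes no pairs at all; and the degenerate Point-Line summand, your ``lower summand,'' loses its offending pair as well (it also has fewer one-cycle points, but that is not what makes the paper's induction work). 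Your re-canonicalization passes via Line-Line relations do not disturb $A$, whereas their effect on your weight would need a separate argument. If you replace your two-level measure by this pair count, the rest of your proof goes through and coincides with the paper's; with the weight $\sum_k k\,m_k$ it does not.
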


\begin{proof}
If there is such a point in the $k^{th}$-bubble, then there are two possibilities:

1. There is a point supported on a zero-cycle in the $(k+1)^{th}$-bubble

2. All the points in the $(k+1)^{th}$-bubble are supported on one-cycles

In the first case, if this class is given by:  $$\displaystyle\beta_0^{k}[a]\beta_0^{k+1}[b]\prod_{j \in A_1}\beta_1^{k+1}[j] .\alpha\beta^{(<k)}\beta^{(>k+1)}$$ with $a>b$ then we can use the following Point-Point relation:

 $$\displaystyle(\beta_0^{k+1}[a]\beta_0^{k}[b]-\beta_0^k[a]\beta_0^{k+1}[b])\prod_{j \in A_1}(\beta_1^k[j]+\beta_1^{k+1}[j]).\alpha\beta^{(<k)}\beta^{(>k)+1}=0 .$$
 
 Which could be written as:\vspace{3mm}

$\displaystyle\beta_0^{k}[a]\beta_0^{k+1}[b]\prod_{j \in A_1}\beta_1^{k+1}[j] .\alpha\beta^{(<k)}\beta^{(>k)+1}=\displaystyle\beta_0^{k+1}[a]\beta_0^{k}[b]\prod_{j \in A_1}\beta_1^{k+1}[j] .\alpha\beta^{(<k)}\beta^{(>k)+1}+$

\hspace{30mm} $ \displaystyle(\beta_0^{k+1}[a]\beta_0^{k}[b]-\beta_0^k[a]\beta_0^{k+1}[b])\prod^{\sim}_{j \in A_1}(\beta_1^k[j]+\beta_1^{k+1}[j]).\alpha\beta^{(<k)}\beta^{(>k)+1}$

\noindent and $\displaystyle\prod^{\sim}$ means that we take every term in the expansion of the product except the one which is the multiplication of all the $\beta$ -classes in the $(k+1)^{th}$-bubble.

In the second case, if the class is given by: $$\displaystyle\beta_0^{k}[a]\beta_1^{k+1}[b]\prod_{j \in A_1}\beta_1^{k+1}[j] .\alpha\beta^{(<k)}\beta^{(>k+1)}$$ with $a>b$ then we can use the following two Point-Line relations:\vspace{3mm}

\begin{itemize}
\item $\beta_0^k[a]\beta_0^{k}[b]\displaystyle\prod_{j \in A_1}\beta_1^k[j] .\alpha\beta^{(<k)}\beta^{(>k)}=$

\hspace{30mm}$(\beta_1^{k+1}[a]\beta_0^{k}[b]-\beta_1^k[a]\beta_0^{k+1}[b])\displaystyle\prod_{j \in A_1}(\beta_1^k[j]+\beta_1^{k+1}[j]).\alpha\beta^{(<k)}\beta^{(>k)+1}$
 
\item $\beta_0^k[a]\beta_0^{k}[b]\displaystyle\prod_{j \in A_1}\beta_1^k[j] .\alpha\beta^{(<k)}\beta^{(>k)}=$

\hspace{30mm}$(\beta_0^{k}[a]\beta_1^{k+1}[b]-\beta_0^{k+1}[a]\beta_1^{k}[b])\displaystyle\prod_{j \in A_1}(\beta_1^k[j]+\beta_1^{k+1}[j]).\alpha\beta^{(<k)}\beta^{(>k)+1}$
 
\end{itemize}

If we subtract them we can write the resulting relation as: \vspace{3mm}

$\displaystyle\beta_0^{k}[a]\beta_1^{k+1}[b]\prod_{j \in A_1}\beta_1^{k+1}[j] .\alpha\beta^{(<k)}\beta^{(>k+1)}=$

\hspace{20mm}$(\beta_0^{k}[a]\beta_1^{k+1}[b])\displaystyle\prod^{\sim}_{j \in A_1}(\beta_1^k[j]+\beta_1^{k+1}[j]).\alpha\beta^{(<k)}\beta^{(>k)+1}+$

\hspace{20mm}$(\beta_1^{k+1}[a]\beta_0^{k}[b]-\beta_1^k[a]\beta_0^{k+1}[b]+\beta_0^{k+1}[a]\beta_1^{k}[b])\displaystyle\prod_{j \in A_1}(\beta_1^k[j]+\beta_1^{k+1}[j]).\alpha\beta^{(<k)}\beta^{(>k)+1}$

So in both case we can represent the original class as the sum of classes that satisfy the required properties of the statement of the lemma for the $k^{th}$-bubble. To each cycle $\alpha$ we associate the following number: \vspace{3mm}

$A(\alpha)=\sharp \left\{
\begin{array}{c|l}
          &  p \text{ is a point supported on a zero cycle with}\\
     (p,i)& \text{    no other point in that bubble }\\
          & i \text{ is the index of a bubble above point p with a}\\
          &       \text{  point in it with multiplicity less than mult(p)}
    \end{array}
 \right\}$ \vspace{3mm}

Then for both cases the representation that we get consist of classes with smaller $A$. Since the number of all these classes is finite this procedure ends at some point. Hence any cycle in the resulting presentation satisfies the properties of the lemma.

\end{proof}

\subsection{Computation of Betti Numbers}

In order to compute the dimension of the cohomology groups of the relative Hilbert scheme of points, we start by counting the number of cycles of a given dimension in the canonical form. 

Let $a_{u,v}$ be the number of cycles of the form $\displaystyle\prod_{j \in A_1}\beta_1^i[j]$ with $v$ points and dimension $u$ i.e. $\displaystyle\sum_{j \in A_1\neq \emptyset} 2j+2=u+2$. Each $\beta_1^i[j]$ adds $j$ to the number of points of  this cycle and adds $2j+2$ to the dimension of the cycle. We claim that $$\displaystyle\sum_{u,v}a_{u,v}t^uq'^v=1+\frac{1}{t^{2}}\left(-1+\prod_{m=1}^{\infty}\frac{1}{1-t^2(t^2q')^{m}}\right) .$$

To see this, note that any term in the expansion of $$\displaystyle\prod_{m=1}^{\infty}\frac{1}{1-t^2(t^2q')^{m}}$$ is obtained as follows. Pick a set of integers $n_1,\cdots,n_k$ and correspondingly  consider the contribution of $$\displaystyle\prod_{l=1}^{k}\frac{1}{1-t^2(t^2q')^{n_l}}.$$ Then for each $l$ in $\{1,\cdots,k\}$ choose $m_l$ which ought to be the multiplicity of $n_l$. Correspondingly we get the term $\displaystyle\prod_{l=1}^{k} {\left( t^2(t^2q')^{n_l}\right) }^{m_l}$ in the expansion. To this term we associate the cycle $\displaystyle\prod_{l=1}^{k}\left( \beta_1^i[n_l]\right)^{m_l}$. This is a bijection, so the generating function for $a_{u,v}$ is $$\displaystyle 1+\frac{1}{t^{2}}\left(-1+\prod_{m=1}^{\infty}\frac{1}{1-t^2(t^2q')^{m}}\right) .$$ Note that the term $\displaystyle\frac{1}{t^2}$ reflects the fact that we take the quotient with $\CS$ which subtracts $2$ from the dimension.

Since we are interested in the normalized Poincar\'{e} polynomial, we make the change of variable $q=t^2q'$. Therefore the generating function becomes: $$1+\frac{1}{t^{2}}\left(-1+\displaystyle\prod_{m=1}^{\infty}\frac{1}{1-t^2q^{m}}\right).$$

In order to count the number of possible configurations in the $i^{th}$ bubble we have to allow the cycle to have one point supported on a zero-cycle i.e. a $\beta_0^i$ term. In the canonical form this point has the least multiplicity among the points in the same bubble. So the number of cycles of the form $\beta_0[k]\displaystyle\prod_{j \in A}\beta_1^i[j]$ with dimension $u$ and $v$ points is equal to the number of cycles of the form $\displaystyle\prod_{j \in B}\beta_1^i[j]$ with dimension $u-2$ and $v$ points. The correspondence is given by sending $\beta_0[k]\displaystyle\prod_{j \in A}\beta_1^i[j]$ to $\beta_1[k]\displaystyle\prod_{j \in A}\beta_1^i[j]$. The inverse map on $C=\displaystyle\prod_{j \in B}\beta_1^i[j]$ is given as follows. Choose $j_0$ to be the minimum of $j \in B$. Send $C$ to $\beta_0[j_0]\displaystyle\prod_{ j \in B\setminus \{j_0\}}\beta_1^i[j]$. Hence the generating function for the number of possible configurations of a bubble in the canonical form is given by:

$$1+\frac{1+t^{-2}}{t^{2}}\left(-1+\displaystyle\prod_{m=1}^{\infty}\frac{1}{1-t^2q^{m}}\right).$$

Since in $\PP^2 \setminus D$ we have only points supported on two-cycles (i.e. $\alpha_2$ terms), the generating function for the number of cycles in the canonical form is given by:

$$\displaystyle\prod_{m=1}^{\infty}\frac{1}{1-t^{2}q^{m}} \sum_{i=0}^{\infty}\left(\frac{1+t^{-2}}{t^{2}}\left(-1+\displaystyle\prod_{m=1}^{\infty}\frac{1}{1-t^2q^{m}}\right)\right)^{i} .$$

Each cycle in the canonical form which does not satisfy the properties of Lemma ~\ref{normal} (i.e. is not in the normal form) has at least one point supported on a zero-cycle in a bubble with multiplicity greater than the minimum multiplicity in the next bubble. We call such a point a bad point and such a cycle a bad cycle. 

\begin{definition}
Let $C$ be a cycle in the canonical form. We assume that 
$$\displaystyle C=\alpha \beta^{< i}\beta^{> i} \beta^{i}_{0 or 1}[n_1]\prod_{l=1}^{k}(\beta_1^i[n_l])^{m_l}$$ with $n_1 < \cdots < n_k $. A \emph{marking} on $C$ in the $i^{th}$ bubble is a subset $S=\{s_1,\cdots,s_t\}$ of $\{2,\cdots,k\}$ and we denote it by $(C,S)$. We assume that $s_1 > \cdots > s_t$. 

To a marked cycle $(C,S)$ we associate the following cycle: 
$$C_S:=(-1)^t \alpha \beta^{< i}\beta^{(> i)+t}\left[\beta^{i+t}_{0 or 1}[n_1]\prod_{l=1}^{k}(\beta_1^{i+t}[n_l])^{\widehat{m_l}} \right] \prod_{j=1}^{t} \beta_0^{i+1-j}[n_j]$$ in which 
$$\widehat{m_l}= \begin{cases} m_l &\mbox{if } l\notin S \\ 
m_l-1 & \mbox{if } l\in S \end{cases}.$$ 

Note that  $dim(C_S)=dim(C)-4t$. A marking for $C$ is a choice of marking for each bubble (we also consider the empty set as a marking).
\end{definition}

For example if $$C=\begin{array}{l}
\beta_1^3[n_5]\\
\beta_1^2[n_1] (\beta_1^2[n_2])^2 \beta_1^2[n_3] \beta_1^2[n_4] \\ 
\beta_1^1[n_0]
\end{array},$$ (i.e. $C$ is the product of expressions in all rows and each row denotes the term in one bubble) with marking $S=\{2,4 \}$ we have: 
$$C_S=\begin{array}{l}
\beta_1^5[n_5] \\
\beta_1^4[n_1] \beta_1^4[n_2] \beta_1^4[n_3] \\
\beta_0^3[n_2] \\
\beta_0^2[n_4] \\
\beta_1^1[n_0] 
\end{array} .$$

For a cycle $D$ in the canonical form, let $\mathcal{M}_D$ be the set of all marked cycles $(C,S)$ so that $C_S=\pm D$. 
 
\begin{lemma}
$\displaystyle \sum_{(C,S)\in \mathcal{M}_D} C_S = \begin{cases} D &\mbox{if } D \text{ is in normal form} \\ 
0 & \mbox{otherwise } \end{cases}$.
\end{lemma}

\begin{proof}
First note that if $S$ is non empty then by construction $C_S$ is a bad cycle. Hence if $D$ is in normal form then the only element of $\mathcal{M}_D$ is $(D,\emptyset)$ and the result follows. 

Let $D$ be a bad cycle. Thus $D$ has a bad point with multiplicity $n_1$ and we assume that $D$ is given by:  $$D=\alpha \beta^{< i}\beta^{> i+d} \beta^{i+d}_{0 or 1}[n_{min}]\prod_{j\in A}\beta_1^{i+d}[j] \prod_{k=1}^{d} \beta_0^{i+k-1}[n_k]$$
\noindent in which $n_1 < n_2 < \cdots < n_d$ and $d$ is chosen to be the largest number with such a sequence of bad points starting from $n_1$. We call the set $\{n_1 ,n_2, \cdots, n_d \}$ a bad component of $D$. 

 Given a subset $T=\{t_1,\cdots,t_c\}$ of $\{ 1, \cdots,d \}$ with $t_1 > \cdots > t_c$, we take $t_0:=d+1$ and define $D^T$ to be the following cycle with every element of $T^c$ as marked.
 $$\begin{array}{r}
 D^T:=\alpha \beta^{< i}\beta^{(> i+d)-d+c} \left(\beta^{i+c}_{0 or 1}[n_{min}]\prod_{j\in A}\beta_1^{i+c}[j] \prod_{t_c<k\leq d}\beta_1^{i+c}[n_k] \right). \\
 \left[\prod_{a=1}^{c}  \beta_0^{i+a-1}[n_{t_a}]\left( \prod_{t_{a-1}<k<t_a} \beta_1^{i+a-1}[n_k]  \right)\right] .\end{array} $$ 
 
For example let $D=\begin{array}{l}
\beta_{1}^{5}[1] \beta_{1}^{5}[6] \\
\beta_{0}^{4}[3] \\
\beta_{0}^{3}[5] \\
\beta_{0}^{2}[7] \\
\beta_{0}^{1}[9] \\
\end{array}$ so $n_1=3$ , $d=4$. Let $T=\{2,4\}$, then $D^T=\begin{array}{l}
\beta_{1}^{3}[1] \beta_{1}^{3}[6] \beta_{1}^{3}[3]\\
\beta_{0}^{2}[5] \beta_{1}^{2}[7]\\
\beta_{0}^{1}[9] \\
\end{array}$. If $T=\{ 1, \cdots,d \}$, $D=D^T$. 

We consider the case in which $D$ has only one bad component. In this case $\mathcal{M}_D$ is the set $(D^T,T^c)$. Hence $$\displaystyle \sum_{(C,S)\in \mathcal{M}_D} C_S = \sum_{T \subset \{1,\cdots,d\}} (D^T)_T = \left(\sum_{b=0}^{d} (-1)^d {d \choose b}\right) . D = 0. $$

 If $D$ has multiple bad components then the sum breaks as the product of the associated sums for each component, hence it is zero.

\end{proof}

By our argument in the beginning of this subsection we have a correspondence between the terms in $$1+\frac{1+t^{-2}}{t^{2}}\left(-1+\displaystyle\prod_{m=1}^{\infty}\frac{1}{1-t^2q^{m}}\right)$$ and the canonical cycles. Since $dim(C_S)=dim(S)-4|S|$, if we change the generating function by replacing each term $\displaystyle\prod_{l=1}^{k} {\left( t^2(t^2q')^{n_l}\right) }^{m_l}$ with $$\displaystyle\prod_{l=1}^{k} {\left( t^2(t^2q')^{n_l}\right) }^{m_l} (1-t^{-4})^{k-1}= \prod_{l=1}^{k} {\left( t^2(t^2q')^{n_l}\right) }^{m_l}\sum_{T\subset \{1,\cdots,k\}} (-1)^{|S|} t^{-4 |S|} ,$$ then we obtain the generating function for $\displaystyle \sum_{(C,S)\in \mathcal{M}_D} C_S$. By the above lemma the aforementioned generating function is the same as the generating function for cycles in  normal form.

 Hence, if we consider $$\displaystyle\frac{1+t^{-2}}{t^{2}}\left(-1+\prod_{m=1}^{\infty}\left((\frac{1}{1-t^2q^{m}}-1)(1-t^{-4})+1\right)\right),$$ which corresponds to taking terms of the form $\displaystyle\prod_{l=1}^{k} {\left( t^2(t^2q')^{n_l}\right) }^{m_l} (1-t^{-4})^{k}$ it differs from the contribution of one bubble to the generating function for normal forms by a factor of $\frac{1}{(1-t^{-4})}$. Thus this contribution is given by:

$$\displaystyle\frac{1+t^{-2}}{t^{2}(1-t^{-4})}\left(-1+\displaystyle\prod_{m=1}^{\infty}\left((\frac{1}{1-t^2q^{m}}-1)(1-t^{-4})+1\right)\right)+1 $$

Since $\displaystyle\frac{1+t^{-2}}{t^2(1-t^{-4})}=\frac{1}{t^2-1}$ the above generating function takes the form:

$$1+\frac{1}{t^{2}-1}\left(-1+\displaystyle\prod_{m=1}^{\infty}\left[\left(\frac{1}{1-t^2q^{m}}-1\right)(1-t^{-4})+1\right]\right)= 1+\frac{1}{t^{2}-1}\left(-1+\displaystyle\prod_{m=1}^{\infty}\frac{1-t^{-2}q^{m}}{1-t^2q^{m}}\right).$$

Hence the generating function for the cycles in the normal form is given by:  

\[\begin{array}{lll}
\displaystyle\prod_{m=1}^{\infty}\frac{1}{1-t^{2}q^{m}} \sum_{i=0}^{\infty}\left(\frac{-1+\prod_{m=1}^{\infty}\frac{1-t^{-2}q^{m}}{1-t^2q^{m}}}{t^{2}-1}\right)^{i} &=& \displaystyle\prod_{m=1}^{\infty}\frac{1}{1-t^{2}q^{m}} \frac{1}{1-\frac{-1+\prod_{m=1}^{\infty}\frac{1-t^{-2}q^{m}}{1-t^2q^{m}}}{t^{2}-1}} \\
 &=& \displaystyle\frac{t^2-1}{t^{2}\prod_{m=1}^{\infty}(1-t^2q^{m})-\prod_{m=1}^{\infty}(1-t^{-2}q^{m})}
\end{array}\]

This generating function agrees with the result of example~\ref{plane example}. Hence the space of cycles in normal form has the same dimension as the cohomology group of the relative Hilbert space. Therefore the cohomology group is isomorphic to the space of normal forms. In particular the relations in Theorem ~\ref{relationtheorem} are all the relations. This completes the proof of Theorem ~\ref{relationtheorem}.

\section{Acknowledgements}

I am very gratful to my advisor Rahul Pandharipande for sharing these ideas with me and many related discussion, and contributing Theorem 1. I would like to thank E. Eftekhary, A. Oblomkov and V. Shende for many related discussions.



\begin{thebibliography}{9}


\bibitem{AB} M.\ Atiyah and\ R.\ Bott , {\em The moment map and equivariant cohomology} Topology.\ {\bf 23} (1984), 1--28.

\bibitem{Bloch} S.\ Bloch , {\em Algebraic cycles and higher K-theory} Adv.\ in\ Math.\ {\bf 61} (1986), 267--304.

\bibitem{Bo} R.\ Bott , {\em A residue formula for holomorphic vector fields} JDG {\bf 4} (1967), 311--332.

\bibitem{De} P.\ Deligne, {\em Th\`{e}orie de Hodge I, II, III}, Proc.\ ICM 1970, v. 1, 425-–430; Publ.\ Math.\ IHES {\bf 40} (1972), 5-–57; {\bf 44} (1974), 5-–77.

\bibitem{Durfee} A.\ H.\ Durfee , {\em Algebraic varities which are a disjoint union of subvarieties}, Lecture\ Notes\ in\ Pure\ and Appl.\ Math.\ {\bf 105} (1987), 99--102.

\bibitem{EG} D.\ Edidin\ and\ W.\ Graham ,{\em Localization in the equivariant intersection theory and the Bott residue formula}, Amer.\ J.\ Math.\ {\bf 120} (1998), 619--636.

\bibitem{EllingsrudStromme} G.\ Ellingsrud and\ S.A.\ Str{\o}mme ,{\em On the homology of the Hilbert scheme of points in the plane}, Invent.\ Math.\ {\bf 87} (1987), 343--352.

\bibitem{Fulton} W.\ Fulton , {\em Intersection theory}, Springer-Verlag\ (1984).

\bibitem{GP} E.\ Getzler\ and\ R.\ Pandharipande , {\em The Betti numbers of $\overline{\mathcal{M}}_{0,n}(r,d)$}, Lecture\ Notes\ in\ Pure\ and Appl.\ Math.\ {\bf 105} (1987), 99--102.


\bibitem{GSo} H.\ Gillet\ and\ C.\ Soul\'{e} , {\em Descent, motives and K-theory}, J.\ Reine\ Angew.\  Math.\ {\bf 478} (1996), 127-–176.

\bibitem{GP2} T. Graber, R. Pandharipande,
Localization of virtual classes,
{\emph{Invent. Math.}} {\bf{135}} (1999), no. 2, 487-518. 

\bibitem{Grothendeick} A.\ Grothendieck {\em Fond\'{e}ments de la G\'{e}om\'{e}trie Alg\'{e}brique}, Sec\ Math\ Paris\ (1962).

\bibitem{Gottsche1}  L.\ G\"{o}ttsche {\em The Betti number of hilbert scheme of points on a smooth projective surface}, Math.\ Ann.\ {\bf 286} (1990) 193--207.

\bibitem{GS} L.\ G\"{o}ttsche\ and\ W.\ Soergel  {\em Perverse Sheaves and the cohomology of Hilbert scheme of mooth algebraic surfaces}, Math\ Ann\ {\bf 296} (1993), 235--245.

\bibitem{Gro} I. \ Grojnowski , {\em Instantons and affine algebras I: the Hilbert scheme and vertex operators
}, Math.\ Res.\ Letter\ {\bf3} 1996. no.2, 275--291.

\bibitem{Joshua} R.\ Joshua\ {\em Higher intersection theory on algebraic stacks: I} K-Theory\ , {\bf 27} (2002), no.\ 2, 134--195.

\bibitem{Joyce} D.\ Joyce\ {\em Motivic invariants of Artin stacks and `stack functions'} Q.\ J.\ Math.\, {\bf 58} (2007), no.\ 3, 345–-392.

\bibitem{Kollar} J.\ Koll\'{a}r\ {\em Rational curves on algebraic varieties} Springer-Verlag\ (1996).

\bibitem{Li} J.\ Li, {\em Stable morphisms to singular schemes and relative stable morphisms}, J. Diff.\ Geom.\ {\bf 57} (2001), no.\ 3, 509--578.

\bibitem{LiWu}  J.\ Li\ and\ B.\ Wu {\em Good degeneration of Quot-schemes and coherent systems}, arXiv:1110.0390.

\bibitem{Mumford} D.\ Mumford {\em Lectures on curves on an algebraic surface}, Princeton\ University\ Press\ (1966).

\bibitem{Nak} H.\ Nakajima , {\em Lectures on Hilbert Schemes of Points on Surfaces
}, University\ Lecture\ Series\ {\bf 18} Amre.\ Math.\ Soc.\ Provience,\ 1999. MR 2001b:14007.


\bibitem{Olsson} M.\ Olsson , {\em Borel-Moore homology, Riemann-Roch transformations, and local terms}, Adv.\ in\ Math.\ To Appear.

\bibitem{Totaro} B.\ Totaro , {\em Chow groups, Chow cohomology, and linear varieties}, Forum Math.\ Sigma 2 (2014), e17, 25 pp.




\end{thebibliography}
\end{document}